\newcommand{\LL}{\mathbb{L}}
\newcommand{\BB}{\mathbb{B}}
\newcommand{\Set}{E}
\newcommand{\Lin}{L}
\newcommand{\Sym}{\mathcal{S}}
\newcommand{\End}{\mathrm{End}}
\newcommand{\Cyc}{\mathcal{C}}
\newcommand{\Der}{\mathrm{Der}}
\newcommand{\rtree}{\mathcal{A}}
\newcommand{\tree}{\mathfrak{a}}
\newcommand{\btree}{\mathcal{B}}
\newcommand{\Par}{\mathrm{Par}}
\newcommand{\Bal}{\mathrm{Bal}}
\newcommand{\Cay}{\mathrm{Cay}}
\newcommand{\id}{\mathrm{id}}
\newcommand{\Img}{\mathrm{Im}}
\newcommand{\ddy}[1]{\displaystyle{\frac{\partial{#1}}{\partial Y}}}
\newcommand{\ddx}[1]{\displaystyle{\frac{\partial{#1}}{\partial X}}}
\newcommand{\pointX}{^{\bullet_X}}
\newcommand{\pointY}{^{\bullet_Y}}
\newcommand{\stirl}[3]{\genfrac{\{}{\}}{0pt}{}{{#1}}{{#2}}_{{#3}}}
\newcommand{\sdiff}[3]{\genfrac{\{}{\}}{}{}{{#1}}{{#2}}_{{#3}}}
\newcommand{\Har}[1]{H_{#1}}
\newcommand{\ff}[2]{(#1)_{#2}}
\newcommand{\setstirling}[2]{%
  \genfrac{\{}{\}}{0pt}{}{{#1}}{{#2}}}
\newcommand{\fub}[1]{\mathfrak{f}(#1)}
\newcommand{\rfub}[2]{\mathfrak{f}_{#2}(#1)}
\newcommand{\sfub}[2]{%
  \overset{\raisebox{-0.2ex}{\rule{0.41em}{0.05em}}}%
  {\mathfrak{f}}_{#2}(#1)}
\newcommand{\CayDer}{\Cay_{\Der}}
\newtheorem{theorem}{Theorem}[section]
\newtheorem{proposition}[theorem]{Proposition}
\newtheorem{lemma}[theorem]{Lemma}
\newtheorem{corollary}[theorem]{Corollary}
\newtheorem*{openproblem*}{Open Problem}
\theoremstyle{definition}
\newtheorem{definition}[theorem]{Definition}
\newtheorem*{remark*}{Remark}
\newtheorem*{example*}{Example}
\title{Counting fixed-point-free Cayley permutations}
\date{13 March 2026}
\author{Giulio Cerbai \;and\; Anders Claesson \vspace{1.5ex}\\
Department of Mathematics\vspace{0.15ex}\\ University of Iceland}
\begin{document}
\maketitle
\thispagestyle{empty}

\begin{abstract}
Two-sort species yield differential equations for functional digraphs
of Cayley permutations.
From these we obtain an explicit formula for fixed-point-free Cayley
permutations and prove that their proportion tends to $1/e$, as
for permutations and endofunctions. Our approach also yields counting
formulas when the functional digraph is a tree, forest, or connected.
\end{abstract}

\section{Introduction}

The \emph{hat-check problem} lies at the intersection of combinatorics
and probability theory. Pierre Rémond de Montmort~\cite{Mon} first
described it in 1708 in his ``Essay d'analyse sur les jeux de hazard''.
The problem asks for the probability that a uniformly random permutation
is a \emph{derangement}, that is, fixed-point-free. Montmort and Bernoulli
later provided solutions featuring a striking occurrence of Napier's
constant in combinatorics: The number of derangements of size $n$ is
$n!\bigl(1/0! - 1/1! + 1/2! - \cdots + (-1)^n/n!\bigr)$
and hence, as $n\to\infty$, the probability converges to $1/e$.

A lesser-known yet easier-to-prove result is that \emph{endofunctions}
have the same asymptotic behaviour. Here endofunctions are maps from
$[n]=\{1,2,\ldots,n\}$ to $[n]$. Among all $n^n$ endofunctions,
$(n-1)^n$ have no fixed points. As $n$ approaches infinity, the quotient
converges again to $1/e$. Indeed, $(n-1)^n/n^n = (1-1/n)^{n}\to 1/e$.

Cayley permutations lie between permutations and endofunctions and
present a more challenging version of this problem. They are
endofunctions whose image contains every integer from one to its
maximum, or equivalently, they are the lexicographically smallest
representatives of positive integer words modulo order isomorphism.
Cayley permutations play a key role in recent extensions of
permutation pattern theory~\cite{BBO,CeCay,CCtop,CCcaypol,CCcaypol2}.

The difficulty of \emph{Cayley-derangements}---Cayley permutations
with no fixed points---lies in their biased nature. In permutations
and endofunctions, fixed points are uniformly distributed. In Cayley
permutations, smaller values appear more often and are thus more
likely to be fixed. Furthermore, the combinatorial description is
less transparent: a permutation decomposes as its fixed points
together with a derangement of the rest, leading directly to
enumeration by subfactorials, but no such simple decomposition
exists for Cayley permutations.


This paper provides a counting formula for Cayley-derangements
(Theorem~\ref{thm_eq_cayder}). Namely, the number of Cayley-derangements
on $[n]$ is
\[
  \sum_{r=0}^n\frac{|\Der[r]|}{r!}\sum_{i=0}^ni!\sdiff{n}{i}{r}
\]
in which $|\Der[r]|$ is the $r$th subfactorial and
$\sdiff{n}{i}{r} = \stirl{n}{i}{r}-\stirl{n}{i}{r+1}$ is a difference
of $r$-Stirling numbers. Using this explicit formula,
we prove (Theorem~\ref{main-theorem}) that the probability
that a uniformly random Cayley permutation is fixed-point-free
is $1/e$, as for permutations and endofunctions.

We use two-sort species to study functional digraphs of
Cayley-derangements, deriving differential equations for the broader
class of \emph{$R$-recurrent} Cayley permutations. These include
Cayley permutations whose functional digraph is a tree, a forest,
or connected. The appearance of species and functional digraphs
here echoes Joyal's~\cite{J} proof of Cayley's formula.
We outline the paper below.
Section~\ref{section_species} reviews combinatorial species and
functional digraphs. Section~\ref{section_cayperms} introduces Cayley
permutations. Section~\ref{section_Rrecdig} constructs $R$-recurrent
functional digraphs, and Section~\ref{section_diffeq} derives the
differential equations yielding our counting formula.
Section~\ref{section_asymp} proves that the proportion of
Cayley-derangements tends to $1/e$.
Section~\ref{section_Joyal} extends Joyal's bijection to the
two-sort setting, while Section~\ref{section_unisort} provides
complementary unisort equations. Section~\ref{section_stats} studies statistics
such as internal nodes and connected components.
Section~\ref{section_generalizations} discusses generalizations.


\section{Combinatorial species and functional digraphs}\label{section_species}

A \emph{$\BB$-species} (or simply \emph{species}) is a functor
$F:\BB\to\BB$, where $\BB$ is the category of finite sets with
bijections as morphisms.  A species defines both a class of labeled
combinatorial structures---$F[U]$---and how relabeling of the
underlying set affects these structures---the transport of structure
$F[\sigma]$. Since the transport of structure is a bijection, the
cardinality of $F[U]$ depends solely on the cardinality of $U$, not on
the nature of the elements of $U$.  The \emph{generating series} of a
species $F$ is the formal power series
$$
F(x)=\sum_{n\ge 0}|F[n]|\frac{x^n}{n!},
$$
where $F[n]$ is short for $F[[n]]$ and $[n]=\{1,2,\dots,n\}$ (in
particular, $[0]=\emptyset$).

Throughout this paper we use the following species, whose formal
definitions can be found in Joyal's seminal paper~\cite{J} or in
the book by Bergeron, Labelle and Leroux~\cite{BLL}. These are the main references
for combinatorial species theory. Shorter introductions
can be found in our papers~\cite{CCcaypol2,CCEG,Cl}.
\begin{description}[labelindent=2em]
\item[$1=\Set_0$] is the species characteristic of the empty set.
\item[$X=\Set_1$] is the species characteristic of singletons.
\item[$\Set$] is the species of sets.
\item[$\Lin$] is the species of linear orders.
\item[$\Sym$] is the species of permutations.
\item[$\Cyc$] is the species of cyclic permutations (cycles).
\end{description}
Their generating series are
$1(x)=1$, $X(x)=x$,
\[
  \Set(x)=e^x,\;
  \Lin(x)=\Sym(x)=1/(1-x)\,\text{ and }\,
  \Cyc(x)=-\log\bigl(1-x\bigr).
\]
Two species $F$ and $G$ are \emph{(combinatorially) equal}, written
$F=G$, if there is a natural isomorphism between them.  Equality implies
$F(x)=G(x)$, but not conversely: $\Lin(x)=\Sym(x)$ while $\Lin\neq \Sym$.


The species $G$ is a \emph{subspecies} of $F$, written $G\subseteq F$,
if for each pair of finite sets $U,V$ and bijection $\sigma:U\to V$ we have
$G[U]\subseteq F[U]$ and $G[\sigma]=F[\sigma]|_{G[U]}$.
We denote by $F_+$ and $F_n$ the subspecies of $F$ consisting of
$F$-structures on nonempty sets and on sets with cardinality $n$,
respectively.

%
%

Species can be viewed as a categorification of exponential generating
series: each operation on formal power series---sum, product,
composition, differentiation---has a canonical lift to species that
recovers the original operation upon taking generating series. We
now describe these lifts, defining only the set of structures; the
transport of structure is implicit in each case, and a formal
treatment can be found in the book by Bergeron et al.~\cite{BLL}.

Given two species $F$ and $G$, an \emph{$(F+G)$-structure} is either an
$F$-structure or a $G$-structure:
\[
  (F+G)[U]=F[U] \sqcup G[U],
\]
where $\sqcup$ denotes disjoint union. An \emph{$(F\cdot G)$-structure}
on $U$ is a pair $(s,t)$ such that $s$ is an $F$-structure on a subset
$U_1\subseteq U$ and $t$ is a $G$-structure on $U_2=U\setminus U_1$:
\[
(F \cdot G)[U] = \bigsqcup_{(U_1,U_2)} F[U_1] \times G[U_2],
\]
where $U=U_1 \sqcup U_2$ and ``$\times$'' denotes cartesian product.

An \emph{$(F\circ G)$-structure}, where
$G[\emptyset] = \emptyset$, is a generalized partition where each block
carries a $G$-structure and the set of blocks is structured by $F$:
\[
(F \circ G)[U] = \bigsqcup_{\beta\in\Par[U]}
F[\beta]\times \prod_{B\in\beta}G[B],
\]
where $\Par[U]$ denotes the set of partitions of $U$.  For instance, the
species of \emph{ballots} (ordered set partitions) is defined by
$\Bal = \Lin\circ\Set_+$ and hence $\Bal(x)=1/(2-e^x)$; here
$|\Bal[n]|$ is the $n$th Fubini number (A000670~\cite{Sl}).

The \emph{derivative} $F'$, defined by $F'[U] = F[U \sqcup \{\star\}]$,
adds an element to the underlying set.
\emph{Pointing} $F^{\bullet}$, defined by $F^{\bullet}[U] = F[U]\times U$,
distinguishes an element of the set. The two are related by $F^{\bullet} = X\cdot F'$.

The species of \emph{endofunctions} is defined by
$$
\End[U]=\{f:U\to U\}
\quad\text{and}\quad
\End[\sigma](f)=\sigma\circ f\circ\sigma^{-1}
$$
for any finite sets $U$ and $V$ and any bijection $\sigma:U\to V$.
Throughout this paper we identify an endofunction $f\in\End[U]$
with its \emph{functional digraph}, defined as the directed graph with
vertex set~$U$ and directed edges $(u,f(u))$, or $u\mapsto f(u)$,
$u\in U$.
Alternatively, the species of functional digraphs is the subspecies
of directed graphs where each node has outdegree one.
The transport of structures is inherited from the
transport on directed graphs: a directed graph on~$U$ with edges
$\{(x,y)\}$ is mapped by the transport of structure
along~$\sigma$ to the directed graph on~$V$ with edges
$\{(\sigma(x),\sigma(y)\}$.
Identifying an endofunction with its functional digraph is a natural
isomorphism, and thus the species of endofunctions and functional
digraphs are combinatorially equal. An instance of a functional
digraph can be found in Figure~\ref{figure_endofun}.

\begin{figure}
\centering
\begin{tikzpicture}[
dot/.style = {draw,circle,inner sep=1.25pt, node contents={}, label=#1},
every loop/.style={min distance=7mm,in=50,out=130,looseness=10},
thick,
baseline=0pt,
scale=1]
\begin{scope}[font = {\small}]
      \node[fill=black] (3) at (0.5,2.75) [dot=above:$3$];
      \node[fill=black] (7) at (0,2) [dot=left:$7$];
      \node[fill=black] (5) at (1,2) [dot=right:$5$];
      \node[fill=black] (4) at (0,1) [dot=left:$4$];
      \node (10) at (-0.33,0) [dot=below:$10$];
      \node (14) at (0.33,0) [dot=below:$14$];
      \node (11) at (0.66,1) [dot=below:$11$];
      \node (13) at (1.33,1) [dot=below:$13$];
      \path (7) edge[-stealth, bend left] (3);
      \path (3) edge[-stealth, bend left] (5);
      \path (5) edge[-stealth, bend left] (7);
      \path (4) edge[-stealth] (7);
      \path (10) edge[-stealth] (4);
      \path (14) edge[-stealth] (4);
      \path (11) edge[-stealth] (5);
      \path (13) edge[-stealth] (5);
      \node[fill=black] (6) at (3.1,2) [dot=left:$6$];
      \node[fill=black] (9) at (3.1,1) [dot=left:$9$];
      \node (1) at (2.76,0) [dot=below:$1$];
      \node (12) at (3.43,0) [dot=below:$12$];
      \path (6) edge[loop above, -stealth] (6);
      \path (9) edge[-stealth] (6);
      \path (1) edge[-stealth] (9);
      \path (12) edge[-stealth] (9);
      \node[fill=black] (2) at (5,2) [dot=left:$2$];
      \node[fill=black] (8) at (5.66,2) [dot=right:$8$];
      \node (15) at (5.66,1) [dot=left:$15$];
      \path (2) edge[-stealth, bend left=60] (8);
      \path (8) edge[-stealth, bend left=60] (2);
      \path (15) edge[-stealth] (8);
\end{scope}
\end{tikzpicture}
\caption{The functional digraph of $f=985776326459548\in\End[15]$,
  where the $i$th letter of~$f$ is $f(i)$.
  Internal nodes are black while leaves are white.}
\label{figure_endofun}
\end{figure}

Let $u\in U$ be a node in the functional digraph of~$f$.
We say that $u$ is \emph{internal} if it has positive indegree, that is,
if $u\in\Img(f)$. Otherwise, $u$ is a \emph{leaf}.
A \emph{fixed point} of~$f$ is an element $u\in U$ such that $f(u)=u$,
which corresponds to a \emph{loop} in the functional digraph.

The species of \emph{permutations} $\Sym\subseteq\End$ consists of
bijective endofunctions, and \emph{cycles} $\Cyc\subseteq\Sym$ are
permutations whose functional digraph is a single cycle. Every
permutation decomposes uniquely into disjoint cycles, giving
$\Sym = \Set(\Cyc)$ and
\[
  \Cyc(x)=\log\Sym(x)=\log\frac{1}{1-x}.
\]
A \emph{derangement} is a fixed-point-free permutation; the species
$\Der\subseteq\Sym$ satisfies $\Sym = \Set\cdot\Der$ since a permutation
is a set of fixed points together with a derangement of the rest.
Solving for $\Der(x)$ yields
\[
\Der(x)= \frac{e^{-x}}{1-x}
\quad\text{and}\quad
|\Der[n]|=n!\sum_{i=0}^n\frac{(-1)^i}{i!}.
\]
The species $\rtree$ of rooted trees plays a key role in this paper.
A \emph{tree} is a connected acyclic simple graph
and we let $\tree$ denote the species of trees.
A \emph{rooted tree} is a tree with a distinguished node, called its
\emph{root}; equivalently, a pointed tree,
$\rtree=\tree^{\bullet}$. A rooted tree consists of
a root (an $X$-structure) to which is appended a set of rooted trees (an $\Set(\rtree)$-structure).
That is,
\[
  \rtree \,=\, X\cdot \Set(\rtree).
\]
A slight tweak of the above definition allows us to see rooted trees as
a subspecies of functional digraphs. Note that there is exactly one
cycle in every connected component of a functional digraph.  We identify
rooted trees with connected functional digraphs whose only cycle is a
loop (a fixed point); this loop plays the role of root, and the edges lose
their orientation.  Conversely, by putting a loop around the root of a
tree and orienting the edges from the leaves towards the root we obtain
a connected functional digraph whose only cycle is a loop. A rooted tree
and its digraph counterpart are depicted in Figure~\ref{figure_rtree}.

\begin{figure}
\centering
\begin{tikzpicture}
[every loop/.style={min distance=7mm,in=50,out=130,looseness=10},
every node/.style={fill=black,draw,circle,inner sep=1.5pt,outer sep=1pt},
thick, baseline=0pt, scale=0.8]
\begin{scope}[font = {\small}]
\node[label={[left]3~~}](3) at (1.5,3){};
\node[label={[left]6}](6) at (.5,2){};
\node[label={[left]8}](8) at (1.5,2){};
\node[label={[right]9}](9) at (2.5,2){};
\node[label={[left]1}](1) at (0,1){};
\node[label={[right]5}](5) at (1,1){};
\node[label={[left]2}](2) at (2,1){};
\node[label={[right]7}](7) at (3,1){};
\node[label={[left]4}](4) at (0,0){};
\path (3) edge[loop above, -stealth] (3);
\path (4) edge[-stealth] (1);
\path (1) edge[-stealth] (6);
\path (5) edge[-stealth] (6);
\path (6) edge[-stealth] (3);
\path (8) edge[-stealth] (3);
\path (9) edge[-stealth] (3);
\path (2) edge[-stealth] (9);
\path (7) edge[-stealth] (9);
\end{scope}
\end{tikzpicture}
\qquad\qquad
\begin{tikzpicture}
[every node/.style={fill=black,draw,circle,inner sep=1.25pt},
thick, baseline=0pt,scale=0.8]
\begin{scope}[font = {\small}]
\node[label={[left]3~~}, outer sep=2pt](3) at (1.5,3){};
\draw (1.5,3) circle (4pt);
\node[label={[left]6}](6) at (.5,2){};
\node[label={[left]8}](8) at (1.5,2){};
\node[label={[right]9}](9) at (2.5,2){};
\node[label={[left]1}](1) at (0,1){};
\node[label={[right]5}](5) at (1,1){};
\node[label={[left]2}](2) at (2,1){};
\node[label={[right]7}](7) at (3,1){};
\node[label={[left]4}](4) at (0,0){};
\draw[-] (4) -- (1) -- (6) -- (3) -- (8);
\draw[-] (6) -- (5);
\draw[-] (3) -- (9) -- (2);
\draw[-] (9) -- (7);
\end{scope}
\end{tikzpicture}
\caption{The functional digraph of $f=693163933\in \End[9]$, on the left,
and the corresponding rooted tree, on the right.
}\label{figure_rtree}
\end{figure}
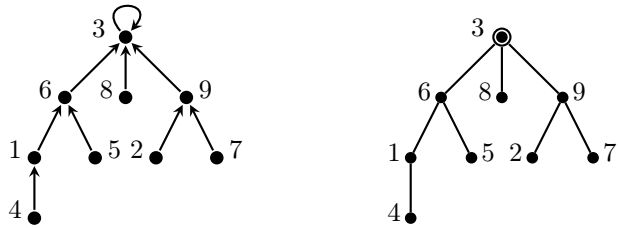

A formula first discovered by Borchardt~\cite{Bor} and later attributed
to Cayley~\cite{Cay} shows that there are $n^{n-2}$ labeled trees
on $[n]$. Joyal~\cite{J} exploited the interplay between rooted trees
and endofunctions to prove Cayley's formula (see also Bergeron et al.~\cite[Section~2.1]{BLL}).
We recall the core idea, which we extend to two-sort species in
Section~\ref{section_Joyal}.
First, observe that an
endofunction is a permutation of rooted trees:
\begin{equation}\label{eq_end=permrtree}
\End=\Sym(\rtree).
\end{equation}
To see this, let us distinguish two types of points in its functional
digraph:
\begin{itemize}
\item \emph{Recurrent points}: $u$ such that $f^k(u)=u$ for some
  $k\ge 1$; they are the points on cycles. Together they form
  a set of cycles, that is, a permutation.
\item \emph{Nonrecurrent points}: $u$ such that $f^k(u)\neq u$ for every
  $k\ge 1$; they are arranged in (rooted) trees each of which hangs from
  a recurrent point, which can be seen as its root.
\end{itemize}
The underlying set $U$ is thus partitioned into a set of rooted trees,
structured by a permutation of recurrent points. For example, the
permutation of the recurrent points of the endofunction in
Figure~\ref{figure_endofun} is
\[
\begin{array}{llllll}
  2\mapsto 8;&
  3\mapsto 5;&
  5\mapsto 7;&
  6\mapsto 6;&
  7\mapsto 3;&
  8\mapsto 2.
\end{array}
\]

Joyal proved that doubly-rooted trees and nonempty linear
orders of rooted trees are combinatorially equal. Indeed, let $t$ be a
structure of $\rtree^{\bullet}=\tree^{\bullet\bullet}$; that is,
a tree where two nodes (not necessarily distinct) are distinguished.
Define the \emph{spine} of~$t$ as the directed path from the first
distinguished node, the \emph{tail}, to the second distinguished node, the
\emph{head}. Joyal used the suggestive name ``vertebrates''
for the species of doubly-rooted trees.
The spine determines a nonempty linear order, with a (rooted) tree
hanging from each of its nodes. The resulting structure is a
nonempty linear order of rooted trees:
\[\rtree^{\bullet}=\Lin_+(\rtree).
\]
Two species $F$ and $G$ are \emph{equipotent}, written $F\equiv G$, if $F(x)=G(x)$.
Since $\Lin\equiv\Sym$ we have
\[
\rtree^{\bullet}=\Lin_+(\rtree)\equiv\Sym_+(\rtree)=\End_+
\]
so $|\rtree^{\bullet}[n]|=|\End[n]|=n^n$ for $n\ge 1$. Finally,
$$
n^n=|\rtree^{\bullet}[n]|=|\tree^{\bullet\bullet}[n]|=n^2\cdot|\tree[n]|
$$
from which Cayley's formula immediately follows.

If we assume that $U=\{u_1,\ldots,u_n\}$ is totally ordered, with
$u_1<u_2<\cdots<u_n$, then Joyal's proof induces a
bijection between doubly-rooted trees on $U$ and functional digraphs on $U$.
In this case, we map the spine~$s\in\Lin[U]$ of a
doubly-rooted tree to the permutation $g\in\Sym[U]$ such that
$g(u_i)=u_j \Leftrightarrow s(i)=u_j$, leaving the rooted trees attached to the spine
untouched. This way we obtain a permutation of rooted trees, that is, an
endofunction.
For instance, the doubly-rooted tree associated with the endofunction of
Figure~\ref{figure_endofun} is depicted in Figure~\ref{figure_joyal}.

\begin{figure}
\centering
\begin{tikzpicture}
[dot/.style = {fill=black,draw,circle, inner sep=1.25pt, node contents={}, label=#1},
every loop/.style={min distance=7mm,in=50,out=130,looseness=10},
thick,
baseline=0pt,
scale=0.8]
\draw (0,2) circle (4pt);
\draw (10,2) circle (4pt);
\draw (10,2) circle (6pt);
\begin{scope}[font = {\small}]
      \node[outer sep=3pt] (8) at (0,2) [dot=above:$8$];
      \node (5) at (2,2.2) [dot=above:$5$];
      \node (7) at (4,1.8) [dot=above:$7$];
      \node (6) at (6,2.2) [dot=above:$6$];
      \node (3) at (8,1.8) [dot=above:$3$];
      \node[outer sep=5pt] (2) at (10,2) [dot=above:$2$];
      \path (8) edge[blue,-stealth,dashed] (5);
      \path (5) edge[blue,-stealth,dashed] (7);
      \path (7) edge[blue,-stealth,dashed] (6);
      \path (6) edge[blue,-stealth,dashed] (3);
      \path (3) edge[blue,-stealth,dashed] (2);
      \node (15) at (0,1) [dot=below:$15$];
      \node (11) at (1.66,1) [dot=below:$11$];
      \node (13) at (2.33,1) [dot=below:$13$];
      \node (4) at (4,1) [dot=left:$4$]; 
      \node (10) at (3.66,0) [dot=below:$10$];
      \node (14) at (4.33,0) [dot=below:$14$];
      \node (9) at (6,1) [dot=left:$9$];
      \node (1) at (5.66,0) [dot=below:$1$];
      \node (12) at (6.33,0) [dot=below:$12$];
      \path (4) edge[-stealth] (7);
      \path (10) edge[-stealth] (4);
      \path (14) edge[-stealth] (4);
      \path (11) edge[-stealth] (5);
      \path (13) edge[-stealth] (5);     
      \path (9) edge[-stealth] (6);
      \path (1) edge[-stealth] (9);
      \path (12) edge[-stealth] (9);
      \path (15) edge[-stealth] (8);
\end{scope}
\end{tikzpicture}
\caption{The doubly-rooted tree associated with the endofunction of
Figure~\ref{figure_endofun} under Joyal's bijection. The spine is
colored in blue and dashed; tail and head are distinguished with one and
two circles, respectively.}\label{figure_joyal}
\end{figure}

By adjusting equation~\eqref{eq_end=permrtree}, one obtains species
identities for several classes of endofunctions defined in
terms of properties of their functional digraph.
For instance, the recurrent points of a connected endofunction form
a cyclic permutation, and hence the species of \emph{connected
endofunctions} is $\Cyc(\rtree)$. Similarly, if the recurrent points
are structured in a set, we obtain \emph{forests} (acyclic functional
digraphs), $\Set(\rtree)$. If the permutation of recurrent points
is a derangement, we obtain functional digraphs with no fixed points,
$\Der(\rtree)$. In some cases, pairing these equations with Cayley's
formula leads to explicit enumeration of the corresponding digraphs.
These examples are well known; see Table~\ref{table_oeis}
(A-numbers refer to the OEIS~\cite{Sl}).

\begin{table}
\begin{center}
\renewcommand{\arraystretch}{1.25}
\begin{tabular}{p{3cm}rlrlrl}
  & \multicolumn{2}{l}{Permutations} & \multicolumn{2}{l}{Endofunctions} \\
  \toprule
  All structures & $\Sym$ & $n!$                     & $\End$         & $n^n$ \\
  Trees          & $1$    & $\delta_{n,0}$           & $\rtree$       & $n^{n-1}$ \\
  Forests        & $\Set$ & $1$                      & $\Set(\rtree)$ & $(n+1)^{n-1}$ \\
  Connected      & $\Cyc$ & $(n-1)!(1-\delta_{n,0})$ & $\Cyc(\rtree)$ & A001865 \\
  Derangements   & $\Der$ & A000166                  & $\Der(\rtree)$ & $(n-1)^n$
\end{tabular}
\end{center}
\caption{Enumeration of functional digraphs of permutations and
endofunctions, where $\delta$ is the Kronecker delta.
}\label{table_oeis}
\end{table}

\section{Cayley permutations}\label{section_cayperms}

We now define the species of Cayley permutations~\cite{CCEG}.
A \emph{Cayley permutation} on~$U$ is a function $f:U\rightarrow [n]$
where $\Img(f)=[k]$ for some $k\leq n=|U|$.
The transport of structure along a bijection $\sigma: U \to V$ is given by
$$
\Cay[\sigma](f)=f\circ\sigma^{-1}.
$$
Cayley permutations and ballots are combinatorially equal~\cite{CCEG}.
In the special case where $U=[n]$, we have the inclusions
$$
\Sym[n]\subseteq\Cay[n]\subseteq\End[n]
$$
and each of these structures can be identified with their functional digraph.
There is, however, no obvious way to define the functional digraph
of Cayley permutations on an arbitrary finite set~$U$. Furthermore,
the transport of structure on $\Sym$ and $\End$ is conjugation, which
cannot be extended to Cayley permutations due to the requirement
$\Img(f)=[k]$. For these reasons, we now assume
$\LL$-species, defined below.

Let $\LL$ denote the category of finite totally ordered sets with
order-preserving bijections as morphisms. An \emph{$\LL$-species} is
a functor $F:\LL\to\BB$. Since there is a unique order-preserving bijection
between any pair of finite totally ordered sets of the same cardinality,
transport of structure is immaterial in the context of $\LL$-species. We often
assume that the underlying total order is $[n]$ with the standard
order and regard permutations and Cayley permutations as subspecies of
endofunctions. This allows us to identify all these structures with
their functional digraph.
Another consequence of having a unique order-preserving bijection
between totally ordered sets with the same cardinality is that for
$\LL$-species $F=G$ if and only if $F(x)=G(x)$.
For example, $\Lin=\Sym$ as $\LL$-species even though $\Lin\neq\Sym$
as $\BB$-species. In this case, a permutation $f\in\Sym[n]$ is
naturally identified with the linear order $f(1)f(2)\ldots f(n)$
in $\Lin[n]$.

Operations on $\BB$-species extend naturally to $\LL$-species.
New operations also become possible. Given two totally ordered sets
$\ell_1=(U_1, \preceq_1)$ and $\ell_2=(U_2,\preceq_2)$, their
\emph{ordinal sum} $\ell=\ell_1 \oplus \ell_2$ is the totally ordered
set $\ell=(U,\preceq)$ where $U=U_1\sqcup U_2$, $\preceq$ respects
$\preceq_1$ and $\preceq_2$, and all elements of $U_1$ are smaller
than the elements of $U_2$.
The totally ordered sets obtained by adding a new minimum or maximum
element to $\ell$ are denoted by $1\oplus\ell$ and $\ell \oplus 1$,
respectively. Given a totally ordered set $\ell=(U,\preceq)$, the
derivative $F'$ of an $\LL$-species $F$ is defined by
$F'[\ell]=F[1\oplus\ell]$. Additionally, the \emph{integral} of $F$,
denoted $\int_0^X F(T) dT$ or $\int F$, is defined by
\[
\left( \int F \right) [\ell] =
\begin{cases}
  \emptyset &  \text{if }\ell = \emptyset, \\
  F[\ell \setminus \{\min(\ell)\}] & \text{if } \ell \neq \emptyset.
\end{cases}
\]
In other words, an $F'$-structure is an $F$-structure on a total
order endowed with a new minimum, and an $\int\! F$-structure
is an $F$-structure on a total order deprived of its minimum.
Derivative and integral can be alternatively defined in terms of
maximum elements: $F'[\ell]=F[\ell\oplus 1]$ and
$(\int F)[\ell]=F[\ell\setminus \{\max(\ell)\}]$ for $\ell\neq\emptyset$.
Similarly to $\BB$-species, operations on $\LL$-species behave well
with respect to generating series, including a species version of
the fundamental theorem of calculus~\cite[equation 35]{BLL}:
$$
\int F' = F_+;
\qquad
\left(\int F\right)' = F;
\qquad
\left( \int_0^X F(T) dT \right)(x) = \int_0^x F(t) dt.
$$
Let us now return to Cayley permutations.
We have observed in Section~\ref{section_species} that functional
digraphs of endofunctions are precisely those directed graphs where
each node has outdegree one. It is easy to see that a functional digraph
is the functional digraph of a permutation if and only if each node
has both indegree and outdegree equal to one.
The following lemma holds because
the internal nodes of $f\in\End[n]$ are labeled by $\Img(f)$, and we
omit its proof.
It characterizes the functional digraphs of Cayley permutations as those
whose internal nodes are labeled with the smallest
elements of the underlying total order.

\begin{lemma}\label{caygraph}
Let $f\in\End[n]$. Then $f$ is a Cayley permutation with $\Img(f)=[k]$
if and only if the internal nodes in the functional digraph of~$f$
have labels $[k]$.
\end{lemma}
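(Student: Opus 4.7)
The plan is to unpack two definitions and observe that they align almost tautologically. First, I recall from Section~\ref{section_endofun} that a node $u$ of the functional digraph of $f \in \End[n]$ is called internal precisely when its indegree is positive, and that this happens if and only if $u \in \Img(f)$ (since the edges of the digraph are $v \mapsto f(v)$, the nodes with positive indegree are exactly the values taken by $f$). Hence the set of labels on internal nodes equals $\Img(f)$ as subsets of $[n]$.

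Next, I invoke the definition from Section~\ref{section_cayperms}: $f \in \End[n]$ is a Cayley permutation with $\Img(f) = [k]$ if and only if its image is the initial segment $\{1, 2, \ldots, k\}$ of $[n]$. Combining this with the previous paragraph, the internal nodes carry the labels $[k]$ if and only if $\Img(f) = [k]$, which is precisely the condition for $f$ to be a Cayley permutation with $\Img(f) = [k]$.

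The equivalence follows in both directions without further work. There is no genuine obstacle: the only substantive content is the identification ``internal node $\Leftrightarrow$ element of the image,'' which is immediate from how the functional digraph is constructed from $f$, together with the defining image condition for Cayley permutations. This is why the authors remark that the proof may be omitted.
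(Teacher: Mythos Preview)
Your argument is correct and matches the paper's own justification, which simply notes that the internal nodes of $f$ are labeled by $\Img(f)$ and omits the proof. You have spelled out exactly this tautological identification together with the defining condition for Cayley permutations.
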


Equivalently, $\Cay$ as an $\LL$-species is isomorphic to the
species of functional digraphs of endofunctions~$f\colon\ell\to\ell$ such that each
internal node is smaller than each leaf.
Although $\LL$-species address some of the cosmetic issues of
Cayley permutations, there are structural problems that require a
more sophisticated approach. As an example, consider the following
functional digraph:
$$
\begin{tikzpicture}
[dot/.style = {draw,circle, inner sep=1.5pt, node contents={}, label=#1},
every loop/.style={min distance=7mm,in=50,out=130,looseness=10},
thick,
baseline=0pt,
scale=0.8]
\begin{scope}[font = {\small}]
\node[fill=black](2) at (0,0)[dot=left:$2$];
\node[fill=black](1) at (0,1)[dot=left:$1$];
\node[fill=black](3) at (1,1)[dot=right:$3$];
\path (1) edge[loop above, -stealth] (1);
\path (3) edge[loop above, -stealth] (3);
\path (2) edge[-stealth] (1);
\end{scope}
\end{tikzpicture}
$$
It is the functional digraph of the endofunction~$f\in\End[3]$
where $f(1)=f(2)=1$ and $f(3)=3$.
In each connected component, every internal node is smaller than
every leaf, yet this property fails for the full functional digraph:
node~$3$ is internal but larger than the leaf~$2$.
Thus $f$ is not a Cayley permutation.
More generally, an endofunction is a permutation of rooted trees,
but a Cayley permutation is \emph{not} a permutation
of rooted trees of Cayley permutations. Similarly, we cannot use
composition to obtain species equations for Cayley permutations whose
functional digraph is connected, a forest, or a derangement.
Generalizing the equation $\Sym=\Set\cdot\Der$ is also not possible:
a Cayley permutation is \emph{not} a pair consisting of the set of
its fixed points together with a Cayley permutation without fixed points.

To circumvent these issues, we assume that internal nodes and leaves are
nodes of different sorts. This can be formalized using two-sort species.

A \emph{two-sort $\BB$-species} (or \emph{$\BB$-species of two sorts})
is a functor $F:\BB\times\BB\to\BB$, where $\BB\times\BB$ denotes
the product category with pairs of finite sets as objects and pairs
of bijections as arrows.
Composition of arrows is componentwise and the identity arrow on the
pair $(U,V)$ is $\id_{(U,V)}=(\id_U,\id_V)$.
Two-sort species allow us to build combinatorial structures whose
underlying set contains elements of two different sorts: those
belonging to~$U$ and those belonging to~$V$.
For a two-sort species~$F$, we write $F=F(X,Y)$ to indicate that the
two sorts are called~$X$ and~$Y$. The letters~$X$ and~$Y$ also serve as
singleton species of the corresponding sort:
\begin{align*}
X[U,V]&=\begin{cases}
\{u\},
& \text{if $U=\{u\}$ and $V=\emptyset$};\\
\emptyset, & \text{otherwise}.
\end{cases}\\[1.5ex]
Y[U,V]&=\begin{cases}
\{v\},
& \text{if $U=\emptyset$ and $V=\{v\}$};\\
\emptyset, & \text{otherwise}.
\end{cases}
\end{align*}
The \emph{generating series} of $F(X,Y)$ is
$$
F(x,y)=
\sum_{i,j\ge 0}|F[i,j]|\frac{x^i}{i!}\frac{y^j}{j!},
$$
where we let $F[i,j]=F[[i],[j]]$.
Most of the operations introduced for unisort species can be naturally
extended to the two-sort context, and definitions can be
found in the usual references~\cite{BLL,Cl}. An example relevant
in the next sections is partial differentiation:
$$
\left(\ddx{F}\right)[U,V]=F[U\cup\{*\},V];
\qquad
\left(\ddy{F}\right)[U,V]=F[U,V\cup\{*\}].
$$
Similarly, we have two pointing operations:
\begin{equation}\label{eq_pointing}
F^{\pointX}=X\cdot\left(\ddx{F}\right);
\qquad
F^{\pointY}=Y\cdot\left(\ddy{F}\right).
\end{equation}
Composition of two-sort species is more delicate and requires
additional care. We define it below in the only case needed in
this paper.
Let $F$ be a unisort species and let $G=G(X,Y)$ be a two-sort species.
Then $F\circ G=(F\circ G)(X,Y)$ is the two-sort species with structures
$$
\left(F\circ G\right)[U,V]=
\bigcup_{\beta\in\Par[U \sqcup V]}F[\beta]\times\prod_{B\in\beta}G[B\cap U,B\cap V].
$$
An $\bigl(F\circ G\bigr)$-structure is thus a generalized
partition on elements of two sorts, where each block carries a
$G$-structure (of two-sorts), and the blocks are structured by the
(unisort) species~$F$.
For example, $\Set\circ(X+Y)=\Set(X)\cdot\Set(Y)$ is the species of
sets of elements of two sorts. A more substantial example opens the
next section.

\section{\texorpdfstring{$R$-recurrent functional digraphs}{R-recurrent functional digraphs}}\label{section_Rrecdig}

Recall from Section~\ref{section_species} that rooted trees of one
sort are defined by the species equation
$$
\rtree(X) \,=\, X\cdot\Set\bigl(\rtree(X)\bigr).
$$
We use the same letter here to denote the two-sort species
$\rtree=\rtree(X,Y)$ of rooted trees with internal nodes of sort~$X$
and leaves of sort~$Y$. Then
\begin{equation}\label{eq_rtree_twosorts}
\rtree(X,Y) \,=\, X\cdot\Set\bigl(\rtree(X,Y)-X+Y\bigr),
\end{equation}
where $\rtree(X,0)=X$ is the tree consisting of a single root (of
sort~$X$). Indeed---as in the unisort case---a rooted tree of
two sorts is obtained by appending a set of rooted trees of two
sorts to the root~$X$. There is one caveat: when we append
a single node~$X$, it becomes a leaf in the resulting tree
and thus its sort changes from~$X$ to~$Y$. This explains the
additional term $-X+Y$ in equation~\eqref{eq_rtree_twosorts}.

In the unisort case, an endofunction is a permutation of rooted trees,
$\End=\Sym\circ\rtree$. More precisely, functional digraphs of
endofunctions are obtained by structuring the roots of a set of rooted
trees in an $\Sym$-structure. Since this construction preserves
internal nodes and leaves, the composition
$
\Sym\circ\rtree(X,Y)
$
yields functional digraphs of two sorts where internal nodes have
sort~$X$ and leaves have sort~$Y$. The same holds for the species of
Table~\ref{table_oeis} if we replace $\Sym$ with $\Set$ (forests,
that is, acyclic functional digraphs), $\Cyc$ (connected functional
digraphs), and $\Der$ (derangements, that is, functional digraphs with
no fixed points).
This brings us to the following definition of $R$-recurrent functional
digraphs of two sorts (see also Figure~\ref{figure_Rstructure}).

\begin{definition}
Let $R$ be a unisort species. The species $\Psi_R=\Psi_R(X,Y)$
of \emph{$R$-recurrent functional digraphs of two sorts} is defined by
\begin{equation}\label{PsiR_def}
\Psi_R(X,Y)=R\circ\rtree(X,Y).
\end{equation}
\end{definition}

In other words, a $\Psi_R$-structure is obtained by putting an
$R$-structure on the roots of a set of rooted trees of two sorts;
in this context, the $R$-structure is called the \emph{recurrent
part} and its points are said to be \emph{recurrent}.
Some significant choices of~$R$ are listed in Table~\ref{table_psiR}.
Note also that $\Psi_X(X,Y)=\rtree(X,Y)$, which gives
\begin{equation}\label{psiR=RcompTree}
\Psi_R = R\circ\Psi_X.
\end{equation}

\begin{figure}
\centering
\begin{tikzpicture}[
dot/.style = {draw,circle,inner sep=1.25pt, node contents={}, label=#1},
every loop/.style={min distance=7mm,in=50,out=130,looseness=10},
thick,
baseline=0pt,
scale=0.8]
\begin{scope}[font = {\small}]
\node[fill=black] (1) at (0.5,3) [dot=above:$1$];
\node[fill=black] (4) at (2,3) [dot=above:$4$];
\node[fill=black] (5) at (3.5,3) [dot=above:$5$];
\node[fill=black] (8) at (5,3) [dot=above:$8$];
\node[fill=black] (9) at (6.5,3) [dot=above:$9$];
\draw[rounded corners] (0, 2.7) rectangle (7, 3.7);
\node[above] at (3.5,3.7){$R$};
\node[fill=black] (10) at (0,2) [dot=left:$10$];
\node (a) at (1,2) [dot=right:$a$];
\node[fill=black] (2) at (0,1) [dot=left:$2$];
\node (d) at (-0.5,0) [dot=left:$d$];
\node (g) at (0.5,0) [dot=right:$g$];
\node[fill=black] (6) at (3.5,2) [dot=left:$6$];
\node[fill=black] (7) at (2.75,1) [dot=left:$7$];
\node (f) at (3.5,1) [dot=below:$f$];
\node[fill=black] (3) at (4.25,1) [dot=right:$3$];
\node (c) at (2.75,0) [dot=below:$c$];
\node (b) at (4.25,0) [dot=below:$b$];
\node (e) at (5,2) [dot=below:$e$];
\path (d) edge[-stealth] (2);
\path (g) edge[-stealth] (2);
\path (2) edge[-stealth] (10);
\path (10) edge[-stealth] (1);
\path (a) edge[-stealth] (1);
\path (c) edge[-stealth] (7);
\path (b) edge[-stealth] (3);
\path (7) edge[-stealth] (6);
\path (f) edge[-stealth] (6);
\path (3) edge[-stealth] (6);
\path (6) edge[-stealth] (5);
\path (e) edge[-stealth] (8);
\end{scope}
\end{tikzpicture}
\caption{An $R$-recurrent functional digraph of two sorts.
Nodes of sort~$X$ (internal nodes) are black and labeled by numbers;
nodes of sort~$Y$ (leaves) are white and labeled by letters.
}\label{figure_Rstructure}
\end{figure}

The two-sort species $\Psi_R$ simultaneously encompasses (subspecies
of) Cayley permutations and endofunctions, allowing us to enumerate
functional digraphs of both types at once. From an enumerative
standpoint, using different sorts for internal nodes and leaves
is equivalent to labeling the internal nodes with the smallest
elements of the underlying total order (and the remaining elements
with the largest), which characterizes functional digraphs of Cayley
permutations by Lemma~\ref{caygraph}. On the other hand, using
the same sort $X=Y$ for internal nodes and leaves yields all
functional digraphs, that is, all endofunctions.
We shall make this observation more formal, but first a couple of
definitions.

Let $F=F(X,Y)$ be a two-sort species. The unisort species $F(X,X)$
is obtained by identifying the sorts~$X$ and~$Y$. Note that
\begin{align}
F(X,X)[U] &= \bigcup_{U_1\sqcup U_2=U}F[U_1,U_2]\label{def_FXX}\\
\shortintertext{and}
|F(X,X)[n]|
&=\sum_{i+j=n}\binom{n}{i}|F[i,j]|.\label{eq_FXX}
\end{align}
Assuming $\LL$-species, we also let $\widehat{F}(Z)$ be the unisort
species whose structures are
\begin{equation}\label{def_HATF}
\widehat{F}[\ell]=\bigcup_{\ell_1\oplus \ell_2=\ell}F[\ell_1,\ell_2].
\end{equation}
That is, an $\widehat{F}$-structure on~$\ell$ is simply an
$F(X,Y)$-structure on $[\ell_1,\ell_2]$, where
$\ell=\ell_1\oplus\ell_2$.
As a result,
\begin{equation}\label{eq_HATF}
|\widehat{F}[n]|
=\sum_{i+j=n}|F[i,j]|.
\end{equation}
\begin{table}
\centering
\renewcommand{\arraystretch}{1.25}
\begin{tabular}{llll}
$R$         & $\Psi_R(X,Y)$-structures       & Endofunctions & Cayley permutations \\
\toprule
$\Sym$      & All structures                 & $\End$        & $\Cay$              \\
$X$         & Trees                          & $\End_{X}$    & $\Cay_{X}$          \\
$\Set$      & Forests                        & $\End_{\Set}$ & $\Cay_{\Set}$       \\
$\Cyc$      & Connected                      & $\End_{\Cyc}$ & $\Cay_{\Cyc}$       \\
$\Der\quad$ & Derangements                   & $\End_{\Der}$ & $\Cay_{\Der}$
\end{tabular}
\caption{Unisort species of endofunctions and Cayley permutations
for different choices of the recurrent structure~$R$ in
$\Psi_R(X,Y)$.}\label{table_psiR}
\end{table}

Now, a $\Psi_{\Sym}$-structure where internal nodes and leaves
have the same sort is simply a functional digraph of one sort, and
we have the combinatorial equality
$$
\Psi_{\Sym}(X,X)=\End(X).
$$
On the other hand, by Lemma~\ref{caygraph}
there is a bijection between $\widehat{\Psi}_{\Sym}[n]$
and $\Cay[n]$. More precisely, structures in $\Psi_{\Sym}[i,j]$
correspond to functional digraphs of Cayley permutations on
$[n]=[i+j]$ that have~$i$ internal nodes and~$j$ leaves.
Therefore,
$$
\widehat{\Psi}_{\Sym}(X)=\Cay(X).
$$
This motivates the following definition.
\begin{definition}\label{def_endR_cayR}
  Let $R$ be a unisort species. The species of
  \emph{$R$-recurrent endofunctions} and the species of
  \emph{$R$-recurrent Cayley permutations} are defined as follows:
  \begin{align*}
    \End_R(X) &\,=\, \Psi_R(X,X); \\
    \Cay_R(X) &\,=\, \widehat{\Psi}_R(X).
  \end{align*}
\end{definition}
Our main goal is to find a formula for $|\Psi_R[i,j]|$. Then, by
choosing $R$ as one of $\Sym$, $X$, $\Set$, $\Cyc$, or $\Der$, and letting $F=\Psi_R$
in equations~\eqref{eq_FXX} and~\eqref{eq_HATF}, we obtain
counting formulas for the endofunctions and Cayley permutations
of Table~\ref{table_psiR}.
Comparing~\eqref{eq_FXX} and~\eqref{eq_HATF} also shows that the
number of endofunctions with~$i$ internal nodes and~$j$ leaves is
simply obtained by multiplying the corresponding number of Cayley
permutations by the binomial coefficient $\binom{i+j}{i}$; this
additional term accounts for the possibility of labeling internal
nodes and leaves of endofunctions without the constraint of using
the smallest~$i$ labels for the internal nodes.

In Section~\ref{section_cayperms}, we observed that neither the equation
$\End = \Sym\circ \rtree$ nor the equation $\Sym = \Set\cdot\Der$
readily extends to Cayley permutations. Both constructions are, however,
valid for the two-sort species $\Psi_{\Sym}$. Indeed,
$\Psi_{\Sym}=\Sym\circ \Psi_X$ by definition, and using
$\Sym=\Set\cdot\Der$ it follows that
\begin{align*}
  \Psi_{\Sym} &= (\Set\cdot\Der)\circ\Psi_X \\
             &= \Set(\Psi_X)\cdot\Der(\Psi_X) = \Psi_{\Set}\cdot \Psi_{\Der}.
\end{align*}
Therefore, in the two-sort sense, functional digraphs are permutations
of rooted trees. They are also products of sets of rooted trees (forests)
and functional digraphs with no fixed points.

\section{A differential equation}\label{section_diffeq}

Equation~\eqref{PsiR_def} defines $\Psi_R(X,Y)$ as the composition of $R$
with rooted trees of two sorts. Our goal is a formula for the coefficients
$|\Psi_R[i,j]|$. While the definition is transparent, it does not readily
yield these coefficients. We give an alternative recursive construction
leading to a differential equation from which they can be extracted.

\begin{figure}
\centering
\begin{tikzpicture}[
dot/.style = {draw,circle,inner sep=1.5pt, node contents={}, label=#1},
every loop/.style={min distance=7mm,in=50,out=130,looseness=10},
thick,
baseline=0pt,
scale=0.8]
\begin{scope}[font = {\small}]
\node at (1,5) {$\dots$};
\node at (3,5) {$\dots$};
\node[fill=black] (3) at (2,5) [dot=above:$3$];
\draw[rounded corners] (0,4.7) rectangle (4,5.7);
\node[above] at (-0.5,5){$R$};
\node at (2,7.25){$\displaystyle{\ddy{\Psi_R}}$};
\node[scale=2.5] at (2,6.25){$\overbrace{}$};
\node[fill=black] (1) at (1,4) [dot=left:$1$];
\node (b) at (1,3) [dot=left:$b$];
\node[fill=black] (6) at (3,4) [dot=right:$6$];
\node (a) at (2.25,3) [dot=left:$a$];
\node[fill=black] (2) at (3,3) [dot=right:$2$];
\node (c) at (3.75,3) [dot=right:$c$];
\node[fill=black] (7) at (3,2.25) [dot=right:$7$];
\node[fill=black] (4) at (3,1.5) [dot=right:$4$];
\node[fill=black] (5) at (3,0.75) [dot=right:$5$];
\node (y) at (3,0) [dot=right:$\star$];
\path (b) edge[-stealth] (1);
\path (1) edge[-stealth] (3);
\path (y) edge[-stealth] (5);
\path (5) edge[-stealth] (4);
\path (4) edge[-stealth] (7);
\path (7) edge[-stealth] (2);
\path (2) edge[-stealth] (6);
\path (6) edge[-stealth] (3);
\path (a) edge[-stealth] (6);
\path (c) edge[-stealth] (6);
\end{scope}
\end{tikzpicture}
\quad
\begin{tikzpicture}[
dot/.style = {draw,circle,inner sep=1.25pt, node contents={}, label=#1},
every loop/.style={min distance=7mm,in=50,out=130,looseness=10},
thick,
baseline=0pt,
scale=0.8]
\node at (2,7.25){$\displaystyle{=}$};
\node[scale=1.5] at (2,5){$\longleftrightarrow$};
\end{tikzpicture}
\quad
\begin{tikzpicture}[
dot/.style = {draw,circle,inner sep=1.25pt, node contents={}, label=#1},
every loop/.style={min distance=7mm,in=50,out=130,looseness=10},
thick,
baseline=0pt,
scale=0.8]
\begin{scope}[font = {\small}]
\node at (2,7.25){$\displaystyle{\Lin(X)}$};
\node[scale=2.5] at (2,6.25){$\overbrace{}$};
\node[fill=black] (2) at (2,5) [dot=right:$2$];
\node[fill=black] (7) at (2,4.25) [dot=right:$7$];
\node[fill=black] (4) at (2,3.5) [dot=right:$4$];
\node[fill=black] (5) at (2,2.75) [dot=right:$5$];
\path (5) edge[-stealth] (4);
\path (4) edge[-stealth] (7);
\path (7) edge[-stealth] (2);
\end{scope}
\end{tikzpicture}
\begin{tikzpicture}[
dot/.style = {draw,circle,inner sep=1.25pt, node contents={}, label=#1},
every loop/.style={min distance=7mm,in=50,out=130,looseness=10},
thick,
baseline=0pt,
scale=0.8]
\draw (3,4) circle (4pt);
\begin{scope}[font = {\small}]
\node at (1,5) {$\dots$};
\node at (3,5) {$\dots$};
\node[fill=black] (3) at (2,5) [dot=above:$3$];
\draw[rounded corners] (0,4.7) rectangle (4,5.7);
\node[above] at (4.5,5){$R$};
\node at (0.25,7.25){$\cdot$};
\node at (2,7.25){$\displaystyle{\Psi_R\pointX}$};
\node[scale=2.5] at (2,6.25){$\overbrace{}$};
\node[fill=black] (1) at (1,4) [dot=left:$1$];
\node (b) at (1,3) [dot=left:$b$];
\node[fill=black,outer sep=3pt] (6) at (3,4) [dot=right:$6$];
\node (a) at (2.25,3) [dot=left:$a$];
\node (c) at (3.75,3) [dot=right:$c$];
\path (b) edge[-stealth] (1);
\path (1) edge[-stealth] (3);
\path (6) edge[-stealth] (3);
\path (a) edge[-stealth] (6);
\path (c) edge[-stealth] (6);
\end{scope}
\end{tikzpicture}
\caption{The construction manifested in
  equation~\eqref{eqdiff_psiR}.
  On the left, a $\Psi_R$-structure with an additional leaf,
  denoted by~$\star$.
  On the right, its decomposition into a linear order together with
  a $\Psi_R$-structure in which an internal node is pointed
  (circled).
  The path from~$\star$ through internal nodes of indegree
  one gives the linear order $5\,4\,7\,2$; it attaches
  at~$6$, which becomes the pointed node.
  The boxed region denotes the $R$-structure on the recurrent
  nodes.}\label{figure_rtree_recursive}
\end{figure}

To build any $\Psi_R$-structure, we first build its recurrent part,
an $R$-structure of recurrent points of sort~$X$.
The nonrecurrent points are located on rooted trees of two sorts,
each of which hangs from a recurrent point.
Such a rooted tree can be obtained recursively. Starting from
its root, one repeatedly appends to a specified internal node a path
of internal nodes that ends with a leaf; that is, a structure of
$\Lin(X)\cdot Y$, which we call a \emph{branch}.
This recursive construction of $\Psi_R$-structures is described
by the differential equation
\begin{equation}\label{eqdiff_psiR}
  \ddy{}\Psi_R(X,Y) = \Lin(X)\cdot\Psi_R\pointX(X,Y),\qquad \Psi_R(X,0)=R(X).
\end{equation}
The initial condition determines the recurrent
structure. On the left-hand side of the main equation, we have a
structure of $(\partial/\partial Y)\Psi_R$; that is, a
$\Psi_R$-structure with an additional leaf. Equivalently, consider
the maximal path starting from the additional leaf (excluded) that
consists of internal nodes with indegree at most one. It is an
$\Lin(X)$-structure, which is appended to an internal node of a
$\Psi_R$-structure. By pointing to that internal node we obtain
a $\Psi_R\pointX$-structure.

Assuming $\LL$-species,
we can rewrite equation~\eqref{eqdiff_psiR} using the integral operator as
$$
\Psi_R(X,Y) = R(X) + \int \Lin(X)\cdot\Psi_R\pointX(X,Y)dY.
$$
Figure~\ref{figure_rtree_recursive} illustrates
this construction. We give a formal proof of equation~\eqref{eqdiff_psiR} next.
The proof uses the virtual species identity $\Lin^{-1} = 1-X$;
see~\cite[Section~2.5]{BLL} for background on virtual species.

\begin{theorem}\label{psiX=tree}
  The two-sort species $\Psi_R(X,Y)$ of $R$-recurrent functional
  digraphs satisfies equation~\eqref{eqdiff_psiR}.
\end{theorem}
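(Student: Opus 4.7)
The plan is to prove Theorem~\ref{psiX=tree} by exhibiting a natural isomorphism of two-sort species realizing equation~\eqref{eqdiff_psiR}, rather than by manipulating generating series. First, the initial condition $\Psi_R(X,0)=R(X)$ follows from specializing equation~\eqref{eq_rtree_twosorts} at $Y=0$: the equation reduces to $\rtree(X,0)=X\cdot\Set(\rtree(X,0)-X)$, and the only two-sort rooted tree with no leaves consists of a single root, so $\rtree(X,0)=X$ and hence $\Psi_R(X,0)=R\circ X=R(X)$.

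For the main equation, I would construct inverse bijections between $\ddy{\Psi_R}[U,V]$ and $(\Lin(X)\cdot\Psi_R\pointX)[U,V]$ for every pair of finite sets $U,V$. Given a $\ddy{\Psi_R}$-structure---that is, a $\Psi_R$-structure $S$ together with a distinguished extra leaf $\star$ of sort~$Y$---trace upward the ancestor path of $\star$ in its tree: let $v_1$ be the parent of $\star$ and $v_{i+1}$ the parent of $v_i$. Let $k$ be the largest index such that $v_1,\ldots,v_k$ are non-root internal nodes each having indegree exactly one in $S$; equivalently, each $v_i$ is the unique child of $v_{i+1}$, while $v_{k+1}$ is either a root of its tree or has at least two children in $S$. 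Send $(S,\star)$ to the pair $\bigl((v_1,\ldots,v_k),(S',v_{k+1})\bigr)$, where $S'$ is obtained from $S$ by deleting $\star$ and the chain nodes together with all their incident edges, and $v_{k+1}$ is marked as the pointed internal node. Conversely, given $\bigl((w_1,\ldots,w_k),(T,v)\bigr)$, insert the directed path $\star\to w_1\to\cdots\to w_k\to v$, so that each $w_i$ becomes a new non-root internal node in $v$'s tree and $\star$ becomes an additional leaf.

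The main items to verify are well-definedness of the extraction map and that the two constructions are mutually inverse. Because the chain nodes are non-root $X$-nodes by construction, removing them leaves the $R$-structure on the surviving roots untouched, so $S'$ is a genuine $\Psi_R$-structure; $v_{k+1}$ stays internal since it is either still a root or a non-root node that retains at least one child after the removal of $v_k$. Mutual inversion is then straightforward: after inserting a chain in front of $v$, the node $v$ is either already a root or has at least two children, so the reverse trace terminates precisely at $v$; in the other direction, maximality of $k$ guarantees that reinserting the extracted chain recovers the original structure. Naturality under transport of structure is automatic because the whole construction depends only on intrinsic combinatorial features. The main subtlety, and the step I expect to be the real obstacle, is pinning down the stopping condition: for a \emph{general} recurrent species $R$ one must ensure that the chain never absorbs a root of one of the trees, which would otherwise destroy the ambient $R$-structure; requiring chain nodes to be non-root $X$-nodes handles this uniformly and matches the informal description in the excerpt (for all species in Table~\ref{table_psiR}, this condition coincides with ``internal node with indegree at most one'' because every root already receives at least one incoming edge from the $R$-structure).
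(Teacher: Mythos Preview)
Your bijective construction is correct and takes a genuinely different route from the paper's formal proof. The paper argues purely algebraically: it differentiates the defining equation $\Psi_X = X\cdot\Set(\Psi_X-X+Y)$ with respect to each variable, uses the virtual-species identity $\Lin\cdot(1-X)=1$ to derive $\partial\Psi_X/\partial Y = \Lin_+(X)\cdot\partial\Psi_X/\partial X$, and then applies the chain rule to $\Psi_R=R\circ\Psi_X$. No bijection is exhibited; the combinatorial picture you formalize appears in the paper only as informal motivation (with Figure~\ref{figure_rtree_recursive}) preceding the theorem. What your approach buys is an explicit natural isomorphism, directly reusable in bijective arguments such as the two-sort Joyal map of Section~\ref{section_Joyal}; what the paper's calculation buys is brevity and a template that mechanically transfers to variants of the recursion. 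Your care about the stopping criterion is well placed: for a general~$R$ the phrase ``internal nodes with indegree at most one'' from the paper's informal description can swallow a root (which carries the $R$-structure), and your replacement by ``non-root internal node of indegree one'' is exactly the uniform fix that makes the bijection go through for arbitrary~$R$.

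One cosmetic slip: the clause ``equivalently, each $v_i$ is the unique child of $v_{i+1}$'' is off by one. The condition ``$v_i$ has indegree exactly one'' says that $v_{i-1}$ (or $\star$, when $i=1$) is the unique child of $v_i$; as written, your clause would instead force $v_{k+1}$ to have indegree one, contradicting your own stopping condition. Since the remainder of your argument consistently uses the correct indegree formulation, this does not affect the proof.
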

\begin{proof}
  Recall from equations~\eqref{eq_rtree_twosorts}
  and~\eqref{psiR=RcompTree} that $\Psi_R=R\circ\Psi_X$ and
  $\Psi_X=X\cdot\Set\bigl(\Psi_X-X+Y\bigr)$, where
  $\Psi_X=\rtree(X,Y)$ and $\Psi_X(X,0)=X$. That $\Psi_R$ satisfies
  the initial condition of~\eqref{eqdiff_psiR} is easy to see:
  \[
    \Psi_R(X,0)=R\circ\Psi_X(X,0)=R(X).
  \]
  Next, we calculate the partial derivatives of $\Psi_X$. On differentiating
  the equation $\Psi_X=X\cdot\Set\bigl(\Psi_X-X+Y\bigr)$ we get
  \[
    \ddx{\Psi_X} \,=\, \Bigl(\ddx{\Psi_X} + X - 1\Bigr)\cdot \Set(\Psi_X-X+Y)
  \]
  or, equivalently,
  \[
    X\cdot(1-\Psi_X)\cdot \ddx{\Psi_X} \,=\, (1-X)\cdot \Psi_X.
  \]
  Since $\Lin\cdot (1-X) = 1$ and $\Lin_+ = X\Lin$ we can rewrite this as
  \begin{equation*}
    \Lin_+(X)\cdot \ddx{\Psi_X} = \Lin_+\bigl(\Psi_X\bigr).
  \end{equation*}
  By a similar calculation we find that
  \begin{equation*}
    \ddy{\Psi_X} = \Lin_+\bigl(\Psi_X\bigr)
  \end{equation*}
  and putting these last two equations together we arrive at
  \[
    \ddy{\Psi_X} = \Lin_+(X) \cdot \ddx{\Psi_X}.
  \]
  Finally, we differentiate $\Psi_R=R\circ\Psi_X$ to obtain
  \begin{align*}
    \ddy{\Psi_R} &= \ddy{\Psi_X} \cdot R'(\Psi_X) \\
                 &= \Lin_+(X) \cdot \ddx{\Psi_X} \cdot R'(\Psi_X) \\
                 &= \Lin_+(X) \cdot \ddx{\Psi_R}
                 = \Lin(X)\cdot \Psi_R\pointX,
  \end{align*}
  which concludes the proof.
\end{proof}

Equation~\eqref{eqdiff_psiR} immediately leads to a recursive
formula for $|\Psi_R[i,j]|$.

\begin{lemma}\label{lemma_recursion_psiR}
  We have
  \begin{equation*}
    |\Psi_R[i,j]|=
    \begin{cases}
      |R[i]| & j=0;\\
      0 & j\ge 1,i=0;\\
      i\cdot\bigl(|\Psi_R[i,j-1]|+|\Psi_R[i-1,j]|\bigr) & i,j\ge 1.
    \end{cases}
  \end{equation*}
\end{lemma}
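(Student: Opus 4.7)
The plan is to derive the recursion directly from the differential equation~\eqref{eqdiff_psiR} by coefficient extraction on the exponential generating series. The two boundary cases fall out immediately: the initial condition $\Psi_R(X,0)=R(X)$ yields $|\Psi_R[i,0]|=|R[i]|$, and for $i=0$, $j\ge 1$ the combinatorial description of a $\Psi_R$-structure (every leaf lies in a tree whose root is internal) forces $|\Psi_R[0,j]|=0$, since one cannot have leaves without internal nodes to attach them to.

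For the main recursion, the key manipulation is to clear $\Lin(X)$ by using the identity $\Lin^{-1}(X)=1-X$ recorded in Section~\ref{section_species}. Multiplying equation~\eqref{eqdiff_psiR} by $(1-X)$ and unfolding $\Psi_R\pointX = X\cdot \ddx{\Psi_R}$ yields
$$
(1-X)\,\ddy{\Psi_R} \,=\, X\cdot \ddx{\Psi_R}.
$$
I would then match the coefficient of $X^iY^{j-1}/\bigl(i!\,(j-1)!\bigr)$ on both sides for $i,j\ge 1$. On the left, $\ddy{\Psi_R}$ contributes $|\Psi_R[i,j]|$, while the index shift induced by multiplication by $X$ makes $X\cdot\ddy{\Psi_R}$ contribute $i\cdot|\Psi_R[i-1,j]|$. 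On the right, the same sort of shift gives $i\cdot|\Psi_R[i,j-1]|$ for $X\cdot\ddx{\Psi_R}$. Rearranging these three terms reproduces the stated recursion.

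There is no real obstacle; the argument is essentially bookkeeping on the generating series once Theorem~\ref{psiX=tree} is in hand. The only point demanding care is consistent tracking of the two kinds of index shifts---those caused by the partial derivatives and those caused by multiplication by $X$. A purely combinatorial variant is also available: the prefactor $i$ corresponds to choosing a distinguished internal node, and the two summands correspond to extending the branch either by appending a new leaf (the $|\Psi_R[i,j-1]|$ term) or by inserting a new internal predecessor (the $|\Psi_R[i-1,j]|$ term), echoing the construction shown in Figure~\ref{figure_rtree_recursive}. However, the algebraic route via $(1-X)\ddy{\Psi_R}=X\ddx{\Psi_R}$ is the most efficient.
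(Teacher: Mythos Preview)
Your proposal is correct and follows essentially the same route as the paper: both clear the $\Lin(X)$ factor via $(1-X)$ to reach $(1-X)\,\ddy{\Psi_R}=X\,\ddx{\Psi_R}$ and then identify coefficients, with the paper merely rewriting this once more as $\ddy{\Psi_R}=X\bigl(\ddx{\Psi_R}+\ddy{\Psi_R}\bigr)$ before extracting. The boundary cases and the coefficient bookkeeping you describe are handled identically.
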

\begin{proof}
  It is easy to see that the first two identities hold.  Next, we
  rewrite equation~\eqref{eqdiff_psiR} of Theorem~\ref{psiX=tree} as
  follows:
  \begin{align*}
    \ddy{\Psi_R} = \Lin\cdot X\cdot \ddx{\Psi_R} 
    &\;\,\iff\;\, \ddy{\Psi_R}\cdot (1-X) = X\ddx{\Psi_R} \\
    &\;\,\iff\;\, \ddy{\Psi_R} = X\cdot \left(\ddx{\Psi_R}+\ddy{\Psi_R}\right).
  \end{align*}
  Identifying coefficients of the last equation gives the third
  identity.
\end{proof}

Next, we use Lemma~\ref{lemma_recursion_psiR} to obtain the
anticipated formula for $|\Psi_R[i,j]|$. It involves the
$r$-Stirling numbers of the second kind, defined as follows.
Let $n,m,r\ge 0$. The \emph{$r$-Stirling number of the second kind}
$\stirl{n}{m}{r}$ is the number of set partitions of $[n]$ with~$m$
blocks such that the numbers $1,2,\ldots,r$ are contained in distinct
blocks. Broder~\cite[Theorem 2]{Br} showed that these numbers obey a
recurrence analogous to the one satisfied by the classic
Stirling numbers:
\begin{equation}\label{def_rstirl}
\stirl{n}{m}{r}=m\stirl{n-1}{m}{r}+\stirl{n-1}{m-1}{r}, \quad r < n,
\end{equation}
with initial conditions $\stirl{n}{m}{n}=\delta_{m,n}$, and $\stirl{n}{m}{r}=0$ if $r>n$.
We define\footnote{We choose this notation mainly to remind us
  of the $r$-Stirling numbers. A fraction also makes some logical sense
  in that a partition is a \emph{division} of an $n$-element set into
  $m$ blocks. The fraction bar may alternatively symbolize the minus
  sign on the right-hand side. Finally, overloading the fraction
  notation has historical precedents in the Legendre and Jacobi
  symbols.}
$$
\sdiff{n}{m}{r}:=\stirl{n}{m}{r}-\stirl{n}{m}{r+1}.
$$
This difference of $r$-Stirling numbers
counts set partitions of $[n]$ with~$m$ blocks such that the numbers
$1,2,\ldots,r$
are contained in distinct blocks, but $1,2,\ldots,r,r+1$ are not.
In other words, $r$ is maximal such that $1,2,\ldots,r$ appear in
distinct blocks. From \eqref{def_rstirl} it follows that
\begin{equation}\label{sdiff_rec}
  \sdiff{n}{m}{r}=m\sdiff{n-1}{m}{r}+\sdiff{n-1}{m-1}{r}, \quad r < n
\end{equation}
except $\sdiff{n}{n}{n-1}=0$, with initial conditions
$\sdiff{n}{m}{n}=\delta_{m,n}$, and $\sdiff{n}{m}{r}=0$ if $r>n$.
In Table~\ref{table_sdiff} we give the numbers $\sdiff{n}{m}{r}$ for
$r = 1,2,3,4$, $n=1,2,\ldots,8$ and $m=1,2,\ldots,8$.

\begin{table}[ht]
\centering
\setlength{\tabcolsep}{3pt}
\begin{tabular}{c@{\hspace{0.5cm}}c}
\begin{tabular}{*{8}{p{1.5em}}}
\multicolumn{8}{c}{$r = 1$} \\
\toprule
 1 &   &   &   &   &   &   &   \\
 1 & 0 &   &   &   &   &   &   \\
 1 & 1 & 0 &   &   &   &   &   \\
 1 & 3 & 1 & 0 &   &   &   &   \\
 1 & 7 & 6 & 1 & 0 &   &   &   \\
 1 & 15 & 25 & 10 & 1 & 0 &   &   \\
 1 & 31 & 90 & 65 & 15 & 1 & 0 &   \\
 1 & 63 & 301 & 350 & 140 & 21 & 1 & 0 \\
\bottomrule
\end{tabular}
&
\begin{tabular}{*{8}{p{1.5em}}}
\multicolumn{8}{c}{$r = 2$} \\
\toprule
 0 &   &   &   &   &   &   &   \\
 0 & 1 &   &   &   &   &   &   \\
 0 & 2 & 0 &   &   &   &   &   \\
 0 & 4 & 2 & 0 &   &   &   &   \\
 0 & 8 & 10 & 2 & 0 &   &   &   \\
 0 & 16 & 38 & 18 & 2 & 0 &   &   \\
 0 & 32 & 130 & 110 & 28 & 2 & 0 &   \\
 0 & 64 & 422 & 570 & 250 & 40 & 2 & 0 \\
\bottomrule
\end{tabular}
\vspace{0.1cm}\\
\begin{tabular}{*{8}{p{1.5em}}}
\multicolumn{8}{c}{$r = 3$} \\
\toprule
 0 &   &   &   &   &   &   &   \\
 0 & 0 &   &   &   &   &   &   \\
 0 & 0 & 1 &   &   &   &   &   \\
 0 & 0 & 3 & 0 &   &   &   &   \\
 0 & 0 & 9 & 3 & 0 &   &   &   \\
 0 & 0 & 27 & 21 & 3 & 0 &   &   \\
 0 & 0 & 81 & 111 & 36 & 3 & 0 &   \\
 0 & 0 & 243 & 525 & 291 & 54 & 3 & 0 \\
\bottomrule
\end{tabular}
&
\begin{tabular}{*{8}{p{1.5em}}}
\multicolumn{8}{c}{$r=4$} \\
\toprule
 0 &   &   &   &   &   &   &   \\
 0 & 0 &   &   &   &   &   &   \\
 0 & 0 & 0 &   &   &   &   &   \\
 0 & 0 & 0 & 1 &   &   &   &   \\
 0 & 0 & 0 & 4 & 0 &   &   &   \\
 0 & 0 & 0 & 16 & 4 & 0 &   &   \\
 0 & 0 & 0 & 64 & 36 & 4 & 0 &   \\
 0 & 0 & 0 & 256 & 244 & 60 & 4 & 0 \\
\bottomrule
\end{tabular}
\end{tabular}
\caption{Triangles of the numbers $\sdiff{n}{k}{r}$,
for $r=1,2,3,4$.}\label{table_sdiff}
\end{table}

\begin{theorem}\label{thm_PsiRr}
For $i,j,r\ge 0$, we have
\begin{equation*}
|\Psi_{R_r}[i,j]|=\frac{i!|R[r]|}{r!}\sdiff{i+j}{i}{r}.
\end{equation*}
\end{theorem}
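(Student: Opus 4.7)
The plan is to induct on $i+j$ using the recurrence supplied by Lemma~\ref{lemma_recursion_psiR}. Applied with $R=R_r$---the subspecies of $R$ concentrated on sets of cardinality $r$, so $|R_r[i]|=|R[r]|$ when $i=r$ and $0$ otherwise---the recurrence reads $|\Psi_{R_r}[r,0]|=|R[r]|$, $|\Psi_{R_r}[i,0]|=0$ for $i\ne r$, $|\Psi_{R_r}[0,j]|=0$ for $j\ge 1$, and $|\Psi_{R_r}[i,j]|=i\bigl(|\Psi_{R_r}[i,j-1]|+|\Psi_{R_r}[i-1,j]|\bigr)$ for $i,j\ge 1$. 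I would verify that the claimed closed form $\frac{i!\,|R[r]|}{r!}\sdiff{i+j}{i}{r}$ satisfies exactly the same initial conditions and the same recurrence, after which uniqueness gives the result.

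For the base cases, when $j=0$ the formula becomes $\frac{i!\,|R[r]|}{r!}\sdiff{i}{i}{r}$. If $i=r$ this equals $|R[r]|$ since $\sdiff{r}{r}{r}=1$ by the initial condition on $\sdiff{}{}{}$. If $i<r$ then $\sdiff{i}{i}{r}=0$ since $r>i$. If $i>r$ then $\stirl{i}{i}{r}=\stirl{i}{i}{r+1}=1$ (both count only the singleton partition), so the difference is zero. When $i=0$ and $j\ge 1$ the formula gives $\sdiff{j}{0}{r}$, which vanishes because every partition of a nonempty set has at least one block.

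For the inductive step, after dividing by the common factor $\frac{i!\,|R[r]|}{r!}$, the identity to check reduces to
\[
  \sdiff{i+j}{i}{r} \,=\, i\,\sdiff{i+j-1}{i}{r} + \sdiff{i+j-1}{i-1}{r},
\]
which is precisely recurrence~\eqref{sdiff_rec} at $n=i+j$, $m=i$. The hypothesis $j\ge 1$ forces $m=i<i+j=n$, so the single excluded case $\sdiff{n}{n}{n-1}=0$ never arises. When $r\ge i+j$ the recurrence~\eqref{sdiff_rec} no longer applies, but both sides of the target identity vanish in this range: for $r>i+j$ every term is zero by the convention $\sdiff{n}{m}{r}=0$ for $r>n$, and for $r=i+j$ the left-hand side is $\delta_{i,i+j}=0$ (again because $j\ge 1$) while both right-hand terms carry $r>n-1$. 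The only real subtlety is this boundary bookkeeping; the heart of the argument is that the recurrence of Lemma~\ref{lemma_recursion_psiR}, itself a transcription of the differential equation $\partial_Y\Psi_R = X(\partial_X\Psi_R+\partial_Y\Psi_R)$, matches the defining recurrence of the $r$-Stirling differences verbatim once the scaling $i!/r!$ is factored out.
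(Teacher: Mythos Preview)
Your proof is correct and follows essentially the same approach as the paper: verify that the closed form obeys the recursion of Lemma~\ref{lemma_recursion_psiR} with $R=R_r$, reducing the inductive step to the recurrence~\eqref{sdiff_rec} after cancelling $\frac{i!\,|R[r]|}{r!}$. Your treatment is in fact more thorough than the paper's, which dismisses the initial conditions as ``easy to verify'' and does not explicitly address the boundary range $r\ge i+j$ where~\eqref{sdiff_rec} is not stated; your handling of these cases is accurate.
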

\begin{proof}
We shall prove that the term
on the right-hand side satisfies the recursion of
Lemma~\ref{lemma_recursion_psiR} with $R=R_r$.
The initial conditions are easy to verify, so let us assume that $i,j\ge 1$.
We need to prove that
\[
\frac{i!|R[r]|}{r!}\sdiff{i+j}{i}{r} =\,
i\cdot\left(
\frac{(i-1)!|R[r]|}{r!}\sdiff{i+j-1}{i-1}{r}
\;+\;\frac{i!|R[r]|}{r!}\sdiff{i+j-1}{i}{r}
\right).
\]
But this is equivalent to
$\sdiff{i+j}{i}{r} = \sdiff{i+j-1}{i-1}{r}+i\sdiff{i+j-1}{i}{r}$,
which follows from~\eqref{sdiff_rec}.
\end{proof}

Using Theorem~\ref{thm_PsiRr} and summing over~$r$ we arrive at the
following corollary.

\begin{corollary}
  For $i,j\ge 0$, we have
  \begin{align}
    |\Psi_R[i,j]|
    &= i!\sum_{r=0}^i\frac{|R[r]|}{r!}\sdiff{i+j}{i}{r}.\label{formula_psiR}
  \end{align}
\end{corollary}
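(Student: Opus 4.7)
The plan is to decompose $R$ into its homogeneous components and apply Theorem~\ref{thm_PsiRr} term by term. Specifically, write
\[
R \,=\, \sum_{r\ge 0} R_r,
\]
where $R_r$ is the subspecies of $R$ supported on $r$-element sets. Since the composition operator $(-)\circ\Psi_X$ is additive (it distributes over disjoint unions of structures), the defining equation $\Psi_R = R\circ\Psi_X$ from~\eqref{psiR=RcompTree} yields
\[
\Psi_R \,=\, \Bigl(\sum_{r\ge 0} R_r\Bigr)\circ\Psi_X \,=\, \sum_{r\ge 0} R_r\circ\Psi_X \,=\, \sum_{r\ge 0}\Psi_{R_r}.
\]
Taking cardinalities of the $(i,j)$-component therefore gives $|\Psi_R[i,j]| = \sum_{r\ge 0}|\Psi_{R_r}[i,j]|$.

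Next I would substitute the closed form supplied by Theorem~\ref{thm_PsiRr}, namely $|\Psi_{R_r}[i,j]| = \tfrac{i!\,|R_r[r]|}{r!}\sdiff{i+j}{i}{r}$, and observe that $|R_r[r]| = |R[r]|$ by definition of $R_r$. This immediately yields
\[
|\Psi_R[i,j]| \,=\, i!\sum_{r\ge 0}\frac{|R[r]|}{r!}\sdiff{i+j}{i}{r}.
\]
It remains to verify that the upper summation index can be taken to be $i$, matching~\eqref{formula_psiR}. This follows from the combinatorial interpretation of $\sdiff{i+j}{i}{r}$ as the number of partitions of $[i+j]$ into $i$ blocks with $1,2,\ldots,r$ in distinct blocks but $1,2,\ldots,r+1$ not: no such partition exists when $r > i$, since one cannot place more than $i$ elements in distinct blocks of a partition with only $i$ blocks. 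Hence $\sdiff{i+j}{i}{r} = 0$ for $r > i$ and the tail of the sum vanishes.

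There is no real obstacle here; the content of the corollary is entirely absorbed by Theorem~\ref{thm_PsiRr}, and the only subtlety is the bookkeeping argument that the infinite sum truncates to $r\le i$. One could alternatively avoid the combinatorial truncation argument by appealing directly to the recursion of Lemma~\ref{lemma_recursion_psiR}, which is linear in $R$ in the sense that $|\Psi_{R+R'}[i,j]| = |\Psi_R[i,j]| + |\Psi_{R'}[i,j]|$, but the decomposition approach above is the most transparent.
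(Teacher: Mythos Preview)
Your proof is correct and follows exactly the approach the paper intends: the paper's own justification is the single line ``Using Theorem~\ref{thm_PsiRr} and summing over~$r$,'' and you have simply spelled out the decomposition $R=\sum_r R_r$, the additivity of $\Psi_{(-)}$, and the truncation $\sdiff{i+j}{i}{r}=0$ for $r>i$ that make this one-line argument precise.
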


Recall from Definition~\ref{def_endR_cayR} that $\End_R(X)=\Psi_R(X,X)$
and $\Cay_R(X)=\widehat{\Psi}_R(X)$.  The number of $R$-recurrent
endofunctions and Cayley permutations on $[n]=[i+j]$ are now obtained by
combining equation~\eqref{formula_psiR} with identities~\eqref{eq_FXX}
and~\eqref{eq_HATF}.

\begin{corollary}\label{cor_endcay_enum}
For $n\ge 0$, we have
\begin{align*}
|\End_{R}[n]|
    &= \sum_{r=0}^n\frac{|R[r]|}{r!}\sum_{i=0}^n\ff{n}{i}\sdiff{n}{i}{r} 
    \shortintertext{and}
    |\Cay_{R}[n]|
    &= \sum_{r=0}^n\frac{|R[r]|}{r!}\sum_{i=0}^ni!\sdiff{n}{i}{r}. 
\end{align*}
\end{corollary}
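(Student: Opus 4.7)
The plan is to derive both formulas directly from equation~\eqref{formula_psiR} for $|\Psi_R[i,j]|$, combined with the cardinality identities~\eqref{eq_FXX} and~\eqref{eq_HATF} applied to $F=\Psi_R$. By Definition~\ref{def_endR_cayR}, we have $\End_R(X) = \Psi_R(X,X)$ and $\Cay_R(X) = \widehat{\Psi}_R(X)$, so the two cardinalities unfold as sums over all decompositions $n = i+j$, and it only remains to substitute and rearrange.

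For the Cayley count, I would start from~\eqref{eq_HATF} to write
\[
|\Cay_R[n]| = |\widehat{\Psi}_R[n]| = \sum_{i+j=n}|\Psi_R[i,j]|,
\]
then substitute the closed form from~\eqref{formula_psiR} to obtain
\[
|\Cay_R[n]| = \sum_{i=0}^{n} i!\sum_{r=0}^{i}\frac{|R[r]|}{r!}\sdiff{n}{i}{r}.
\]
Since $\sdiff{n}{i}{r} = 0$ whenever $r > i$, the inner sum can be extended to $r = 0,\ldots,n$ without changing its value, so I can interchange the order of summation to obtain the claimed expression.

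For the endofunction count the plan is identical, but starting from~\eqref{eq_FXX}: one has $|\End_R[n]| = \sum_{i+j=n}\binom{n}{i}|\Psi_R[i,j]|$, and substituting~\eqref{formula_psiR} produces a factor of $\binom{n}{i} \cdot i!$. I would simplify this using $\binom{n}{i}\, i! = \ff{n}{i}$ (the falling factorial), and then swap the order of summation in exactly the same way as above.

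Neither step is really an obstacle: the substantive work was done in Theorem~\ref{thm_PsiRr} and its corollary. The only thing to be careful about is justifying the exchange of summation, which amounts to noting that $\sdiff{n}{i}{r}$ vanishes for $r > i$ (it is a count of partitions with enough blocks to separate $1,\ldots,r$), so the truncation of the inner sum at $i$ is cosmetic and the swap is valid.
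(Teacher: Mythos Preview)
Your proposal is correct and follows exactly the approach the paper indicates: the paper simply states that the corollary is obtained ``by combining equation~\eqref{formula_psiR} with identities~\eqref{eq_FXX} and~\eqref{eq_HATF},'' and you have spelled out those substitutions and the harmless extension of the inner sum (using $\sdiff{n}{i}{r}=0$ for $r>i$) in the expected way.
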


The expression $\ff{n}{i}$, above, is the \emph{falling factorial}; it
is defined by
\[
  \ff{n}{i} = n(n-1)(n-2)\cdots(n-i+1) = \binom{n}{i}i!.
\]

Letting $R=\Set_r$ in Corollary~\ref{cor_endcay_enum}, we obtain
two identities for the number of forests of $r$ rooted trees of
endofunctions and Cayley permutations:
\begin{align*}
  |\End_{\Set_r}[n]| &= \sum_{i=0}^n\frac{\ff{n}{i}}{r!}\sdiff{n}{i}{r}; \\
  |\Cay_{\Set_r}[n]| &= \sum_{i=0}^n\frac{i!}{r!}\sdiff{n}{i}{r}.
\end{align*}
These allow us to rewrite the formulas of Corollary~\ref{cor_endcay_enum} as
\begin{align}
|\End_{R}[n]| &= \sum_{r=0}^n|R[r]|\cdot|\End_{\Set_r}[n]|;\label{eq_endonew}\\
|\Cay_{R}[n]| &= \sum_{r=0}^n|R[r]|\cdot|\Cay_{\Set_r}[n]|\label{eq_caynew}.
\end{align}
The combinatorics underlying equations~\eqref{eq_endonew}
and~\eqref{eq_caynew} is clear: An $\End_R$-structure
is obtained by putting an $R$-structure over a set of rooted trees.
The same holds for $\Cay_R$-structures.
Furthermore, a forest of endofunctions is a set of rooted trees:
$\End_{\Set}=\Set\circ\End_X$
and $\End_{\Set_r} = \Set_r\circ\End_X = \frac{1}{r!}(\End_X)^r$.
Thus, hidden underneath equation~\eqref{eq_endonew} we find
the species composition $\End_R = R\circ\End_X$, which
is equation~\eqref{PsiR_def} when $Y=X$.
On the other hand, $\Cay_{\Set}\neq\Set\circ\Cay_X$.

Alternatively, letting $R=\Sym_r$ we obtain
\begin{align*}
  |\End_{R}[n]| &= \sum_{r=0}^n\frac{|R[r]|}{r!}|\End_{S_r}[n]|; \\
  |\Cay_{R}[n]| &= \sum_{r=0}^n\frac{|R[r]|}{r!}|\Cay_{S_r}[n]|,
\end{align*}
where $|\End_{S_r}[n]|$ and $|\Cay_{S_r}[n]|$ are simply the number of
endofunctions and Cayley permutations with $r$ recurrent points.


By substituting the number $|R[r]|$ of $R$-structures of size~$r$ into
Theorem~\ref{thm_PsiRr}, we obtain counting formulas for
the two-sort version of the species of Table~\ref{table_psiR}.
Using Corollary~\ref{cor_endcay_enum}
we also obtain identities for the corresponding unisort structures
listed in Table~\ref{table_formulas}. A noteworthy example is a
formula for the number of Cayley-derangements, the main result
of this paper:
\begin{theorem}\label{thm_eq_cayder}
  The number of fixed-point-free Cayley permutations of $[n]$ is
  \begin{equation*}
    |\Cay_{\Der}[n]|
    \,=\, \sum_{r=0}^n\frac{|\Der[r]|}{r!}\sum_{i=0}^ni!\sdiff{n}{i}{r}.
  \end{equation*}
\end{theorem}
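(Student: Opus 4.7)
The plan is to recognize that fixed-point-free Cayley permutations are exactly the objects counted by $\Cay_{\Der}[n]$ in the sense of Definition~\ref{def_endR_cayR}, and then to specialize Corollary~\ref{cor_endcay_enum} to $R = \Der$.

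First I would verify the identification. In the two-sort framework of Section~\ref{section_Rrecdig}, a Cayley permutation $f \in \Cay[n]$ corresponds, via Lemma~\ref{caygraph}, to a $\widehat{\Psi}_{\Sym}$-structure whose recurrent part is a permutation of the internal nodes of its functional digraph. A fixed point of $f$ is, by definition, a loop $f(u)=u$, and any such loop is a $1$-cycle sitting inside the recurrent permutation. Hence the recurrent permutation is a derangement if and only if $f$ has no fixed point, so replacing the recurrent species $\Sym$ by $\Der$ cuts out exactly the Cayley permutations with at least one fixed point. This gives $\widehat{\Psi}_{\Der}[n] = \Cay_{\Der}[n]$ as the set of fixed-point-free Cayley permutations of $[n]$.

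With this identification in hand, the theorem is the direct specialization of Corollary~\ref{cor_endcay_enum} at $R = \Der$: the coefficients $|R[r]|$ become the subfactorials $|\Der[r]|$, and the outer and inner sums produce the claimed expression. Since Corollary~\ref{cor_endcay_enum} was already obtained by combining the formula $|\Psi_R[i,j]| = i!\sum_{r=0}^{i}\frac{|R[r]|}{r!}\sdiff{i+j}{i}{r}$ with the relation $|\widehat{F}[n]| = \sum_{i+j=n}|F[i,j]|$ from~\eqref{eq_HATF}, no further calculation is required.

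The only delicate point, and the place where I would be most careful, is the bijective identification of fixed-point-free Cayley permutations with $\widehat{\Psi}_{\Der}[n]$. One must check that the composition $\Psi_R = R \circ \rtree(X,Y)$ assigns the recurrent $R$-structure to exactly the set on which all fixed-point candidates live, so that switching $R$ from $\Sym$ to $\Der$ forbids precisely loops and nothing else. This is already baked into the framework of Section~\ref{section_Rrecdig} and into the entry for $\Cay_{\Der}$ in Table~\ref{table_psiR}; once it is acknowledged, the theorem is immediate.
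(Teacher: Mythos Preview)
Your proposal is correct and follows essentially the same approach as the paper: the paper presents Theorem~\ref{thm_eq_cayder} as an immediate specialization of Corollary~\ref{cor_endcay_enum} at $R=\Der$, with the identification of fixed-point-free Cayley permutations as $\Cay_{\Der}$ already established by the framework of Section~\ref{section_Rrecdig} and Table~\ref{table_psiR}. Your explicit verification that loops in the functional digraph live precisely on the recurrent part is a welcome bit of extra care, but the paper treats this as already settled and states the theorem without further proof.
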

The first few numbers of this sequence are
$$
1,
 0,
 1,
 4,
 25,
 184,
 1617,
 16492,
 191721,
 2503040,
 36267393,
 577560596.
$$
At the time of writing, this sequence is not in the OEIS~\cite{Sl}.


We highlight three additional noteworthy instances of
Corollary~\ref{cor_endcay_enum} below.

\begin{proposition}\label{prop_Cay_new}
  The number of Cayley permutations of $[n]$
  whose functional digraph is a tree equals the number of all
  Cayley permutations of $[n-1]$. That is,
  \[
    |\Cay_{X}[n]| = |\Cay[n-1]|.
  \]
  The number of Cayley permutations of $[n]$ whose functional digraph is
  a forest equals
  \begin{align*}
    |\Cay_E[n]| &= \sum_{r=0}^n\frac{1}{r!}\sum_{i=0}^ni!\sdiff{n}{i}{r}.\\
    \intertext{The number of Cayley permutations of $[n]$ whose functional
    digraph is connected equals}
    |\Cay_{\Cyc}[n]| &= \sum_{r=1}^n\frac{1}{r}\sum_{i=0}^ni!\sdiff{n}{i}{r}.
  \end{align*}
\end{proposition}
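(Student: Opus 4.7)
The plan is to apply Corollary~\ref{cor_endcay_enum},
\[
|\Cay_R[n]| \,=\, \sum_{r=0}^n \frac{|R[r]|}{r!}\sum_{i=0}^n i!\,\sdiff{n}{i}{r},
\]
in turn with $R=\Set$, $R=\Cyc$, and $R=X$.

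For the forest case ($R=\Set$), I substitute $|\Set[r]|=1$ for every $r\ge 0$, and the claimed expression falls out immediately. For the connected case ($R=\Cyc$), I use $|\Cyc[0]|=0$ and $|\Cyc[r]|=(r-1)!$ for $r\ge 1$, so that $|\Cyc[r]|/r!=1/r$; since the $r=0$ term vanishes, the $1/r$ in the claimed formula is read as $0$ at $r=0$ and the sum is effectively over $r\ge 1$. Both of these are pure substitution.

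The tree case ($R=X$) needs one additional step. Since $|X[1]|=1$ and $|X[r]|=0$ otherwise, only the term $r=1$ contributes in Corollary~\ref{cor_endcay_enum}, yielding
\[
|\Cay_X[n]| \,=\, \sum_{i=0}^n i!\,\sdiff{n}{i}{1}.
\]
Since Cayley permutations are equinumerous with ballots, $|\Cay[n-1]| = \sum_{i=0}^{n-1} i!\,\stirl{n-1}{i}$. It therefore suffices to prove the identity $\sdiff{n}{i}{1} = \stirl{n-1}{i}$. By the combinatorial description of $\sdiff{n}{i}{r}$ given in the paper, $\sdiff{n}{i}{1}$ counts partitions of $[n]$ into $i$ blocks in which $r=1$ is maximal with $1,\ldots,r$ in distinct blocks; equivalently, partitions of $[n]$ into $i$ blocks in which $1$ and $2$ lie in the same block. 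The map that removes $2$ from its block (which remains nonempty because it still contains $1$) and then relabels $\{3,\ldots,n\}$ order-preservingly as $\{2,\ldots,n-1\}$ is a bijection onto partitions of $[n-1]$ into $i$ blocks, with the evident inverse.

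The only nontrivial step is this last bijection for the tree case, but it is essentially dictated by the very description of $\sdiff{n}{i}{r}$. Everything else reduces to reading the appropriate values of $|R[r]|$ off and substituting into Corollary~\ref{cor_endcay_enum}.
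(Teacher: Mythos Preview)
Your proof is correct and follows essentially the same route as the paper: apply Corollary~\ref{cor_endcay_enum} with $R=X$, $R=\Set$, $R=\Cyc$, and for the tree case identify $\sdiff{n}{i}{1}$ with the ordinary Stirling number $\stirl{n-1}{i}{}$. The paper simply asserts this last identity, whereas you supply the natural bijection (remove~$2$ and relabel); your argument tacitly assumes $n\ge 2$, but the case $n=1$ is immediate by inspection.
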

\begin{proof}
  Setting $R=X$ in Corollary~\ref{cor_endcay_enum} we get the first identity:
  \[
    |\Cay_{X}[n]|
    = \sum_{r=0}^n\frac{|X[r]|}{r!}\sum_{i=0}^ni!\sdiff{n}{i}{r}
    = \sum_{i=0}^ni!\sdiff{n}{i}{1}
    = |\Cay[n-1]|,
  \]
  where in the last step $\sdiff{n}{i}{1}$ equals the
  $(n-1,i)$th Stirling number of the second kind. The second and third
  identities are immediate consequences of Corollary~\ref{cor_endcay_enum}.
\end{proof}

In Section~\ref{section_Joyal} we generalize Joyal's bijection
between doubly-rooted trees and endofunctions to the two-sort
setting.

\begin{table}
\centering
\renewcommand{\arraystretch}{1.7}
\begin{tabular}{lllcc}
                 & $R$~~~ & $|\Psi_{R_r}[i,j]|$                    & $|\End_R[n]|$     & $|\Cay_R[n]|$ \\
\toprule
All structures   & $\Sym$ & $i!\sdiff{i+j}{i}{r}$                      & $n^n$             & $|\Cay[n]|$ \\
Trees            & $X$    & $i!\delta_{r,1}\sdiff{i+j}{i}{r}$          & $n^{n-1}$         & $|\Cay[n-1]|$ \\
Forests          & $\Set$ & $\frac{i!}{r!}\sdiff{i+j}{i}{r}$           & $n^{n-2}$         & Proposition~\ref{prop_Cay_new} \\
Connected        & $\Cyc$ & $\frac{i!}{r}\sdiff{i+j}{i}{r}$            & A001865           & Proposition~\ref{prop_Cay_new} \\
Derangements     & $\Der$ & $|\Der[r]|\frac{i!}{r!}\sdiff{i+j}{i}{r}$  & $(n-1)^n$         & Theorem~\ref{thm_eq_cayder} \\
\end{tabular}
\caption{Number of $R$-recurrent functional digraphs with~$r$
recurrent points, $i$ internal nodes and $j$ leaves.
}\label{table_formulas}
\end{table}

\section{Asymptotic proportion of Cayley-derangements}\label{section_asymp}

We opened this paper by recalling the answer to the hat-check
problem: the probability that a permutation chosen uniformly at
random is a derangement converges to $1/e$ as its size grows to
infinity. In this section we prove that Cayley-derangements have the
same asymptotic behaviour.

\begin{theorem}\label{main-theorem}
Let $\fub{n} = |\Cay[n]|$ denote the $n$th Fubini number. Then
\[
  \lim_{n\to\infty} \frac{|\CayDer[n]|}{\fub{n}} = \frac{1}{e}.
\]
\end{theorem}

Define
\[
  \rfub{n}{r} = \sum_{i \geq 0} i!\, \stirl{n}{i}{r}
  \quad\text{and}\quad
  \sfub{n}{r} = \sum_{i \geq 0} i!\, \sdiff{n}{i}{r}.
\]
In particular, $\sfub{n}{r} = \rfub{n}{r} - \rfub{n}{r+1}$,
$\rfub{n}{0}=\fub{n}$ and $\sfub{n}{0} = 0$. Next, define the weights
\[
  w_r = \frac{|\Der[r]|}{r!} = \sum_{k=0}^{r} \frac{(-1)^k}{k!},
\]
so that Theorem~\ref{thm_eq_cayder} takes the form
\[
  |\CayDer[n]| = \sum_{r\ge 0} w_r\,\sfub{n}{r}.
\]
Define the error $\epsilon_r = w_r - 1/e$.
\begin{lemma}\label{lem:dererr}
  We have
  \[
    \epsilon_r = \frac{(-1)^r}{(r+1)!}
    + O\!\left(\frac{1}{(r+2)!}\right).
  \]
  In particular, $|\epsilon_r| \leq 1/(r+1)!$.
\end{lemma}
\begin{proof}
  Since $\epsilon_r = -\sum_{k \geq r+1} (-1)^k/k!$ is an alternating
  series with decreasing terms, the result follows.
\end{proof}

Denote by\vspace{-1ex}
\begin{align}
  p_r(n) &= \frac{\sfub{n}{r}}{\fub{n}}\nonumber \\
  \intertext{the proportion of ballots in which $r$ is maximal
  such that $1,\ldots,r$ are in distinct blocks. Then}
  \frac{|\CayDer[n]|}{\fub{n}} &= \sum_{r \geq 0} w_r\, p_r(n). \label{eq:prop_of_cayder}
\end{align}

\begin{lemma}\label{lem:fubini}
For all $n \geq 0$,
$p_r(n)$ defines a probability distribution in~$r$
on~$\{0, 1, \ldots, n\}$. That is,
$\sfub{n}{r} \geq 0$ for all $0\le r\le n$ and
$\sum_{r=0}^{n} \sfub{n}{r} = \fub{n}$.
\end{lemma}

\begin{proof}
  Every ballot of~$[n]$ has a unique maximal~$r$ such that
  $1, \ldots, r$ are in distinct blocks and hence
  $\sum_{r=0}^{n} \sfub{n}{r} = \fub{n}$. Further, $\sfub{n}{r}$ is
  trivially non-negative.
\end{proof}

\begin{lemma}\label{lem:error}
For all $n \geq 0$,
\[
  \frac{|\CayDer[n]|}{\fub{n}} - \frac{1}{e}
  = \sum_{r \geq 0} \left(\frac{|\Der[r]|}{r!} - \frac{1}{e}\right) p_r(n)
  = \sum_{r \geq 0} \epsilon_r\, p_r(n).
\]
\end{lemma}

\begin{proof}
Writing $w_r = 1/e + \epsilon_r$ in
equation~\eqref{eq:prop_of_cayder} gives the identity.
\end{proof}

For $n \geq 2$ and distinct $a, b \in [n]$, let $S(a,b;n,i)$
denote the set of partitions of~$[n]$ into $i$~blocks in which
$a$ and~$b$ lie in the same block. For instance,
the set of partitions of~$[4]=\{1,2,3,4\}$ into two blocks
in which $1$ and~$2$ lie in the same block is
\[
  S(1,2;4,2)
  = \bigl\{\, \{1,2\}\{3,4\},\;
    \{1,2,3\}\{4\},\; \{1,2,4\}\{3\} \,\bigr\}
\]
and by deleting $2$ from its block we obtain all set partitions
of $\{1,3,4\}$ into two blocks:
\[
  \bigl\{\, \{1\}\{3,4\},\;
    \{1,3\}\{4\},\; \{1,4\}\{3\} \,\bigr\}.
\]
This process is clearly invertible, and more generally we obtain
the following simple lemma.

\begin{lemma}\label{lem:merge}
For $n \geq 2$ and distinct $a, b \in [n]$, set partitions
of~$[n]$ into $i$~blocks in which $a$ and~$b$ lie in the
same block are in one-to-one correspondence with set partitions
of $[n]\setminus\{b\}$ into $i$ blocks. In particular,
\[
  \bigl|S(a,b;n,i)\bigr| = \setstirling{n-1}{i}.
\]
\end{lemma}

To bound the tail $\sum_{r=0}^{K-1} p_r(n)$ that appears in
the proof of Theorem~\ref{main-theorem}, we need an upper bound
on~$\fub{n} - \rfub{n}{K}$.
Lemma~\ref{lem:merge} provides one by summing over the
$\binom{K}{2}$ pairs in~$[K]$.

\begin{lemma}\label{lem:union}
For $2 \leq K \leq n$,
\[
  \fub{n} - \rfub{n}{K} \leq \binom{K}{2}\, \fub{n-1}.
\]
\end{lemma}

\begin{proof}
The difference $\setstirling{n}{i} - \stirl{n}{i}{K}$ counts
partitions of~$[n]$ into $i$~blocks in which at least one pair
from~$[K]$ shares a block. Thus,
\begin{align*}
  \setstirling{n}{i} - \stirl{n}{i}{K}
  &= \Biggl|\,\bigcup_{\{a,b\} \subseteq [K]}S(a,b;n,i)\,\Biggr|\\[1ex]
  &\leq \sum_{\{a,b\} \subseteq [K]} |S(a,b;n,i)|
  \leq \binom{K}{2}\, \setstirling{n-1}{i}.
\end{align*}
Multiplying by $i!$ and summing over $i$ yields
$\fub{n} - \rfub{n}{K}
\leq \binom{K}{2}\, \fub{n-1}$.
\end{proof}

The asymptotic behaviour of the Fubini numbers is well known and
readily obtainable from the generating series
$\sum_{n \geq 0} \fub{n}\, x^n/n! = 1/(2 - e^x)$
by singularity analysis~\cite{FS}.

\begin{lemma}\label{lem:fubini-asymp}
We have
\[
  \fub{n}
  = \frac{n!}{2(\log 2)^{n+1}}
    \bigl(1+O(r_1^{\,n})\bigr),
  \quad\text{where}\quad
  r_1
  = \frac{\log 2}{\sqrt{(\log 2)^2+(2\pi)^2}} <1.
\]
In particular, $\fub{n-1}/\fub{n} \sim (\log 2)/n \to 0$.
\end{lemma}

\begin{proof}[Proof of Theorem~\ref{main-theorem}]
By Lemma~\ref{lem:error},
\[
  \frac{|\CayDer[n]|}{\fub{n}} - \frac{1}{e}
  = \sum_{r \geq 0} \epsilon_r\, p_r(n),
\]
where $(p_r(n))_{r \geq 0}$ is the probability distribution
of Lemma~\ref{lem:fubini} and
$|\epsilon_r| \leq 1/(r+1)!$ (Lemma~\ref{lem:dererr}).
Fix an integer $K \geq 2$ and apply the triangle inequality:
\[
  \left|\sum_{r \geq 0} \epsilon_r\, p_r(n)\right|
  \leq \sum_{r=0}^{K-1} |\epsilon_r|\, p_r(n)
  + \sum_{r \geq K} |\epsilon_r|\, p_r(n).
\]
Bounding $|\epsilon_r| \leq 1$ in the first sum and
$|\epsilon_r| \leq 1/(K+1)!$ in the second gives
\[
  \left|\sum_{r \geq 0} \epsilon_r\, p_r(n)\right|
  \leq \sum_{r=0}^{K-1} p_r(n) + \frac{1}{(K+1)!}.
\]
Since $\sfub{n}{r} = \rfub{n}{r} - \rfub{n}{r+1}$,
the sum telescopes:
\begin{align*}
  \sum_{r=0}^{K-1} p_r(n)
  &= \frac{1}{\fub{n}}
     \sum_{r=0}^{K-1}
     \bigl(\rfub{n}{r} - \rfub{n}{r+1}\bigr) \\
  &= \frac{\rfub{n}{0} - \rfub{n}{K}}{\fub{n}}
  = \frac{\fub{n} - \rfub{n}{K}}{\fub{n}}.
\end{align*}
Applying Lemma~\ref{lem:union} yields
\[
  \left|\frac{|\CayDer[n]|}{\fub{n}} - \frac{1}{e}\right|
  \leq \binom{K}{2}\frac{\fub{n-1}}{\fub{n}}
  + \frac{1}{(K+1)!}.
\]
By Lemma~\ref{lem:fubini-asymp},
$\fub{n-1}/\fub{n} \to 0$.
Given $\varepsilon > 0$, first choose $K$ so that
$1/(K+1)! < \varepsilon/2$, then choose $N$ so that
$\binom{K}{2} \fub{n-1}/\fub{n} < \varepsilon/2$
for all $n \geq N$. The result follows.
\end{proof}

\begin{proposition}\label{prop:cayder-asymp}
The rate of convergence in Theorem~\ref{main-theorem} is
\[
  \frac{|\CayDer[n]|}{\fub{n}} - \frac{1}{e} \sim -\frac{\log 2}{2en}.
\]
\end{proposition}

The proof is similar to that of Theorem~\ref{main-theorem}
but more involved; see Appendix~\ref{appendix_asymp}.

For comparison, recall from Lemma~\ref{lem:dererr} that the analogous
error for permutations is
\[
  \frac{|\Der[n]|}{n!} - \frac{1}{e} \sim \frac{(-1)^{n}}{(n+1)!},
\]
which decays super-exponentially, while for endofunctions the ratio is
exactly $(1-1/n)^n$ and
\[
  \left(1-\frac{1}{n}\right)^n - \frac{1}{e} \sim -\frac{1}{2en}.
\]
The Cayley-derangement and endofunction errors are both eventually negative, and
since $\log 2 < 1$, the former is eventually smaller in magnitude. The permutation
error is negligible compared to either. This gives the ordering
\[
  \frac{|\Der[n]|}{n!} > \frac{|\CayDer[n]|}{\fub{n}} > \left(1-\frac{1}{n}\right)^n
\]
for large~$n$.

\section{A two-sort species extension of Joyal's bijection}\label{section_Joyal}

Recall from Section~\ref{section_species} Joyal's
bijection between doubly-rooted trees and functional digraphs of
endofunctions. It identifies the spine of a doubly-rooted tree with
a nonempty permutation, leaving the rooted trees that hang from the
spine untouched. This bijection implies the unisort
$\LL$-species identity
\begin{equation}\label{eq_joyalunisort}
\rtree^{\bullet} = \End_+.
\end{equation}
We shall extend Joyal's construction to the two-sort case by
revealing a link between functional digraphs $\Psi_{\Sym}$
and rooted trees $\Psi_X=\rtree(X,Y)$.
In the proof of Theorem~\ref{psiX=tree}, we showed that
$$
\ddy{\Psi_X} = \Lin_+\bigl(\Psi_X\bigr).
$$
Assuming $\LL$-species, we have $\Lin=\Sym$ and hence
$\Lin_+(\rtree) = \Sym_+(\rtree)=\Psi_{\Sym_+}$. That is,
\begin{equation}\label{eq_joyal2}
\ddy{\Psi_X}=\Psi_{\Sym_+}.
\end{equation}
We have found a combinatorial proof of~\eqref{eq_joyal2}
in the spirit of Joyal's proof of~\eqref{eq_joyalunisort}.
On the left-hand side, we have a rooted tree
of two sorts with an additional leaf. Define its \emph{spine} as the
path from this leaf to the root. Ignoring the initial
leaf, this path is a nonempty linear order of (internal) nodes of
sort~$X$, where a rooted tree of two-sorts hangs from each of its
nodes. Identifying the spine with a permutation in the usual way
yields a nonempty permutation of rooted trees, that is, a structure of
$\Sym_+\circ\Psi_X=\Psi_{\Sym_+}$, the species on the right-hand side.
In other words, the additional leaf of a structure of
$(\partial/\partial Y)\Psi_X$ plays the role of \emph{tail}, while
the root plays the role of \emph{head}.
A two-sort vertebrate is born in the process, and by reading a
permutation off its spine we obtain a permutation of rooted trees of
two sorts; that is, a structure of $\Psi_{\Sym_+}$.
This construction---an instance of which can be
found in Figure~\ref{figure_2sortJoyal}---is easily invertible,
and thus it yields a combinatorial proof of equation~\eqref{eq_joyal2}.

\begin{figure}
\centering
\begin{tikzpicture}[
dot/.style = {draw,circle,inner sep=1.5pt, node contents={}, label=#1},
every loop/.style={min distance=7mm,in=50,out=130,looseness=10},
thick,
baseline=0pt,
scale=0.8]
\begin{scope}[font = {\small}]
\node (y) at (0.5,3) [dot=above:$y^\star$];
\node[fill=black] (5) at (2,3) [dot=above:$5$];
\node[fill=black] (2) at (3.5,3) [dot=above:$2$];
\node[fill=black] (4) at (5,3) [dot=above:$4$];
\node[fill=black] (3) at (6.5,3) [dot=right:$3$];
\node[fill=black] (1) at (2,2) [dot=left:$1$];
\node (a) at (1.625,1) [dot=left:$a$];
\node (b) at (2.375,1) [dot=right:$b$];
\node (c) at (4.625,2) [dot=left:$c$];
\node (e) at (5.375,2) [dot=right:$e$];
\node (d) at (6.5,2) [dot=below:$d$];
\path (y) edge[blue,-stealth,dashed] (5);
\path (5) edge[blue,-stealth,dashed] (2);
\path (2) edge[blue,-stealth,dashed] (4);
\path (4) edge[blue,-stealth,dashed] (3);
\path (3) edge[loop, -stealth] (3);
\path (a) edge[-stealth] (1);
\path (b) edge[-stealth] (1);
\path (1) edge[-stealth] (5);
\path (c) edge[-stealth] (4);
\path (e) edge[-stealth] (4);
\path (d) edge[-stealth] (3);
\end{scope}
\end{tikzpicture}
\hspace{1.5cm}
\begin{tikzpicture}[
dot/.style = {draw,circle,inner sep=1.5pt, node contents={}, label=#1},
every loop/.style={min distance=7mm,in=50,out=130,looseness=10},
thick,
baseline=0pt,
scale=0.8]
\begin{scope}[font = {\small}]
      \node[fill=black] (2) at (0.5,3) [dot=above:$2$];
      \node[fill=black] (3) at (0,2.25) [dot=left:$3$];
      \node[fill=black] (5) at (1,2.25) [dot=right:$5$];
      \node (d) at (0,1.25) [dot=left:$d$];
      \node (a) at (0.66,0.25) [dot=below:$a$];
      \node (b) at (1.33,0.25) [dot=below:$b$];
      \node[fill=black] (1) at (1,1.25) [dot=right:$1$];
      \path (3) edge[-stealth, bend left] (2);
      \path (2) edge[-stealth, bend left] (5);
      \path (5) edge[-stealth, bend left] (3);
      \path (d) edge[-stealth] (3);
      \path (a) edge[-stealth] (1);
      \path (b) edge[-stealth] (1);
      \path (1) edge[-stealth] (5);
      \node[fill=black] (4) at (2.5,3) [dot=left:$4$];
      \node (c) at (2.125,2) [dot=below:$c$];
      \node (e) at (2.875,2) [dot=below:$e$];
      \path (4) edge[loop above, -stealth] (4);
      \path (c) edge[-stealth] (4);
      \path (e) edge[-stealth] (4);
\end{scope}
\end{tikzpicture}
\caption{A rooted tree of two sorts with an additional leaf,
denoted $y^\star$, and the corresponding functional digraph under
the extension of Joyal's construction. The spine of the tree,
dashed in blue, is the linear order $5243$; it is mapped to the recurrent
part of the functional digraph on the right.
}\label{figure_2sortJoyal}
\end{figure}

\section{Unisort species of Cayley functional digraphs}\label{section_unisort}

In this section, we build on the enumerative results obtained
for $\Psi_R$ in the two-sort case to obtain species
equations for its unisort counterpart $\Cay_R$.
These rely on the \emph{ordinal product} of $\LL$-species, defined as
\[
(F \odot G)[\ell] \,= \sum_{\ell=\ell_1 \oplus \ell_2} F[\ell_1] \times G[\ell_2].
\]
An $F\odot G$- structure is thus obtained by putting an $F$-structure
on the initial segment $\ell_1$ and a $G$-structure on the terminal
segment $\ell_2$.

We shall use $\LL$-species to show that
$$
\Cay_R
=\frac{1}{2}\sum_{r\ge 0} R_r\odot(1+\Set^r\Bal^r).
$$
This specializes to give expressions for the unisort species of
Cayley-derangements and the other species in Table~\ref{table_psiR}:
\begin{equation}\label{unisort_cayder}
\Cay_{\Der}
=\frac{1}{2}\sum_{r\ge 0} \Der_r\odot\left(1+\Set^r\Bal^r\right).
\end{equation}

Let us start with a simple lemma.

\begin{lemma}\label{lemma_EBal}
For $r\ge 1$, we have
\begin{equation*}
\sum_{i\ge 0}i!\cdot\sdiff{n+r+1}{i}{r}
=r!\cdot r\cdot |\bigl(\Set^r\Bal^{r+1}\bigr)[n]|.
\end{equation*}
\end{lemma}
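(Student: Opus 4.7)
The plan is to decompose $\sdiff{n+r+1}{i}{r}$ combinatorially and then evaluate the resulting double sum using classical Stirling and ballot identities.

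By the combinatorial description given in the paper, $\sdiff{n+r+1}{i}{r}$ counts set partitions of $[n+r+1]$ into $i$ blocks in which $1,\dots,r$ occupy distinct ``special'' blocks while $r+1$ is forced to join one of these $r$ special blocks. Picking which special block receives $r+1$ accounts for a factor of $r$. Each of the remaining $n$ elements $r+2,\dots,n+r+1$ either joins one of the $r$ special blocks or contributes to the $i-r$ other (nonempty) blocks. Summing over the number $k$ of these elements routed to a special block would yield
\[
  \sdiff{n+r+1}{i}{r}
  \,=\, r\sum_{k=0}^{n}\binom{n}{k}\,r^{k}\,\stirl{n-k}{i-r}{0},
\]
where $\stirl{\cdot}{\cdot}{0}$ denotes the classical Stirling number of the second kind.

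Multiplying by $i!$, substituting $j=i-r$, and interchanging sums, I get
\[
  \sum_{i}i!\,\sdiff{n+r+1}{i}{r}
  \,=\, r\sum_{k=0}^{n}\binom{n}{k}\,r^{k}\sum_{j\ge 0}(j+r)!\,\stirl{n-k}{j}{0}.
\]
The inner sum I would evaluate via the classical exponential generating function $\sum_{s\ge 0}\stirl{s}{j}{0}\,x^{s}/s!=(e^{x}-1)^{j}/j!$ combined with the series $\sum_{j\ge 0}\binom{j+r}{r}u^{j}=(1-u)^{-(r+1)}$:
\[
  \sum_{s\ge 0}\Bigl(\sum_{j}(j+r)!\,\stirl{s}{j}{0}\Bigr)\frac{x^{s}}{s!}
  \,=\, r!\sum_{j\ge 0}\binom{j+r}{r}(e^{x}-1)^{j}
  \,=\, \frac{r!}{(2-e^{x})^{r+1}},
\]
which gives $\sum_{j}(j+r)!\,\stirl{s}{j}{0}=r!\,|\Bal^{r+1}[s]|$. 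This intermediate identity also admits a direct combinatorial reading: a $\Bal^{r+1}$-structure on $[s]$ is a ballot of $[s]$ into $j$ nonempty blocks together with $r$ separators distributed among the $j+1$ slots (with repetition), so that $|\Bal^{r+1}[s]|=\sum_{j}\binom{j+r}{r}\,j!\,\stirl{s}{j}{0}$.

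Since $r^{k}=|\Set^{r}[k]|$ (because $\Set^{r}(x)=e^{rx}$), the outer binomial convolution in $k$ equals $|(\Set^{r}\cdot\Bal^{r+1})[n]|$, and the stated identity $\sum_{i}i!\,\sdiff{n+r+1}{i}{r}=r!\cdot r\cdot|(\Set^{r}\Bal^{r+1})[n]|$ follows. The main obstacle is spotting the right combinatorial refinement of $\sdiff{n+r+1}{i}{r}$; once $r+1$'s forced placement is factored off, everything reduces to the standard identity $(1-u)^{-(r+1)}=\sum_{j\ge 0}\binom{j+r}{r}u^{j}$ applied with $u=e^{x}-1$.
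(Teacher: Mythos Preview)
Your proof is correct. Both arguments rest on the same combinatorial observation: in a partition counted by $\sdiff{n+r+1}{i}{r}$, the element $r+1$ must join one of the $r$ special blocks, and the remaining $n$ elements split between the special blocks and the $i-r$ ordinary blocks. The difference lies in how the factor $i!$ is handled. The paper absorbs it at the outset, reading $i!\sdiff{n+r+1}{i}{r}$ as the number of \emph{ballots} of $[n+r+1]$ with $i$ blocks subject to the constraint; a single bijection then peels off the permutation of $1,\dots,r$ among the special block positions (giving $r!$), the choice of home for $r+1$ (giving $r$), and leaves directly an $\Set^r\cdot\Bal^{r+1}$-structure on the remaining $n$ elements, with the $\Bal^{r+1}$ part coming from the interstitial sub-ballots $\beta_1,\dots,\beta_{r+1}$. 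You instead first decompose the unordered count $\sdiff{n+r+1}{i}{r}$, then multiply by $i!$ and evaluate the resulting double sum via the generating-function identity $(1-u)^{-(r+1)}=\sum_j\binom{j+r}{r}u^j$ at $u=e^x-1$. Your route is slightly longer but has the merit of isolating the clean intermediate identity $\sum_j(j+r)!\stirl{s}{j}{0}=r!\,|\Bal^{r+1}[s]|$; the paper's route is a one-step bijection with no algebra.
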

\begin{proof}
The left-hand side counts ballots of $[n+r+1]$ with~$i$ blocks,
where~$r$ is maximal such that $1,\dots,r$ are contained in
distinct blocks. Any such ballot is structured
$$
\beta_1\;\{u_1,\ldots \}\;
\beta_2\;\{u_2,\ldots \}\;
\dots\;
\beta_r\;\{u_r,\ldots\}\;
\beta_{r+1},
$$
where $u_1u_2\dots u_r$ is a permutation of $1,\ldots,r$
and the $\beta_i$'s are ballots filling the space between the blocks
containing the $u_i$'s.
The permutation of $1,\ldots,r$ gives the $r!$ term
on the right-hand side. Since $r+1$ belongs to one of the blocks containing the $u_i$'s, we have~$r$ choices to arrange it.
Finally, the remaining elements, $r+2,r+3,\ldots,n+r+1$, are
arranged in a structure of $\Set^r\cdot\Bal^{r+1}$ (of size $n$):
elements in the same block as some $u_j$ determine an
$\Set^r$-structure, while the other elements determine the ballots
$\beta_1,\dots,\beta_{r+1}$, that is, a structure of $\Bal^{r+1}$.
\end{proof}

\begin{theorem}\label{thm_uniPsiR}
We have the $\LL$-species identity
\[
  \Cay_{R} = R + \sum_{r\ge 1}
  (R_r)^{\bullet}\odot\left(\int\Set^r\Bal^{r+1}\right). 
\]
\end{theorem}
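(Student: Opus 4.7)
The plan is to leverage the fact that, for $\LL$-species, $F=G$ holds if and only if $F(x)=G(x)$, reducing the claim to a cardinality check at every size $n\ge 0$. For the left-hand side, I would invoke Corollary~\ref{cor_endcay_enum}, which gives
\[
|\Cay_R[n]|=\sum_{r=0}^n\frac{|R[r]|}{r!}\sum_i i!\sdiff{n}{i}{r},
\]
and then isolate the extremal summands. For $n\ge 1$ the $r=0$ term vanishes, because $\stirl{n}{i}{0}=\stirl{n}{i}{1}$ (both reduce to the classical Stirling number when $n\ge 1$) and hence $\sdiff{n}{i}{0}=0$; the $r=n$ term contributes exactly $|R[n]|$, via $\sdiff{n}{i}{n}=\delta_{i,n}$. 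Thus, for $n\ge 1$,
\[
|\Cay_R[n]|=|R[n]|+\sum_{r=1}^{n-1}\frac{|R[r]|}{r!}\sum_i i!\sdiff{n}{i}{r},
\]
while the case $n=0$ is immediate since both sides equal $|R[0]|$.

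For the right-hand side, the species $(R_r)^{\bullet}$ lives entirely at size~$r$, so the ordinal split $[n]=\ell_1\oplus\ell_2$ is forced to have $|\ell_1|=r$. Consequently the ordinal product collapses to a single term,
\[
\bigl|\bigl((R_r)^{\bullet}\odot {\textstyle\int}\Set^r\Bal^{r+1}\bigr)[n]\bigr|=r|R[r]|\cdot|\Set^r\Bal^{r+1}[n-r-1]|
\]
when $n\ge r+1$, and vanishes otherwise because $\int$ demands a nonempty argument. Substituting $n-r-1$ for the parameter~$n$ in Lemma~\ref{lemma_EBal} yields $|\Set^r\Bal^{r+1}[n-r-1]|=\frac{1}{r\cdot r!}\sum_i i!\sdiff{n}{i}{r}$, so each summand with $r\ge 1$ collapses to $\frac{|R[r]|}{r!}\sum_i i!\sdiff{n}{i}{r}$, matching term-by-term with the rewritten left-hand side. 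Together with the leading $R$ contribution of $|R[n]|$, this closes the identity.

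The main obstacle is the careful bookkeeping around Lemma~\ref{lemma_EBal}: the lemma is stated with a nonstandard shift (ballots on $[n+r+1]$, with $r$ interleaving singleton-containing sets separating $r+1$ ballots), and matching it to the right-hand side requires first using $\int$ to delete the minimum of $\ell_2$, which drops the size from $n-r$ to $n-r-1$. The factor of $r$ picked up from the pointing~$\bullet$ then cancels precisely the $r$ appearing in $r\cdot r!$ on the right of the lemma, so both sides reassemble into the same sum and the $\LL$-species identity follows.
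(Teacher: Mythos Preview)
Your proposal is correct and follows essentially the same route as the paper: reduce the $\LL$-species identity to a cardinality check, expand the right-hand side via the forced ordinal split and the integral, apply Lemma~\ref{lemma_EBal} with the shift $n\mapsto n-r-1$, and match against Corollary~\ref{cor_endcay_enum}. In fact you are slightly more careful than the paper in one respect: you explicitly verify that the $r=0$ summand in Corollary~\ref{cor_endcay_enum} vanishes for $n\ge 1$ (via $\stirl{n}{i}{0}=\stirl{n}{i}{1}$), whereas the paper's proof arrives at $\sum_{r=1}^{n}(\cdots)$ and invokes the corollary without commenting on the missing $r=0$ term.
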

\begin{proof}
Let $F$ denote the species on the right-hand side.
To prove the desired $\LL$-species identity, it suffices to show
that $|F[n]|=|\Cay_{R}[n]|$ for $n\ge 0$.
Expanding the definition of~$F$ yields
\begin{align*}
|F[n]|
&= |R[n]| + \sum_{r=1}^n
|R^{\bullet}[r]|\cdot
\left|\left(\int \Set^r\Bal^{r+1}\right)[n-r]\right|\\
&= |R[n]| + \sum_{r=1}^{n-1}
r|R[r]|\cdot
\left|\left(\Set^r\Bal^{r+1}\right)[n-r-1]\right|\cdot\frac{r!}{r!}\\
&= |R[n]| + \sum_{r=1}^{n-1}\frac{|R[r]|}{r!}\sum_{i\ge 0}i!\sdiff{n}{i}{r}
&& \text{(by Lemma~\ref{lemma_EBal})}\\
&= \sum_{r=1}^{n}\frac{|R[r]|}{r!}\sum_{i\ge 0}i!\sdiff{n}{i}{r} \\
&=|\Cay_{R}[n]|
&& \text{(by Corollary~\ref{cor_endcay_enum})}
\end{align*}
where in the penultimate equality we used that the term corresponding to $r=n$ in
the summation equals
$$
\frac{|R[n]|}{n!}\sum_{i\ge 0}i!\sdiff{n}{i}{n}
= \frac{n!|R[n]|}{n!}\sdiff{n}{n}{n} = |R[n]|.
$$
This concludes the proof.
\end{proof}

An arguably simpler species equation for $\Cay_R$ is
provided in the next result.

\begin{corollary}
We have
$$
\Cay_R
=\frac{1}{2}\sum_{r\ge 0} R_r\odot\left(1+\Set^r\Bal^r\right).
$$
\end{corollary}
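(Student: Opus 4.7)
The plan is to deduce this compact expression from Theorem~\ref{thm_uniPsiR} by eliminating the integral $\int\Set^r\Bal^{r+1}$. The key species identity on which everything hinges is
\[
\bigl(\Set^r\Bal^r\bigr)' \,=\, 2r\,\Set^r\Bal^{r+1}\qquad(r\ge 1),
\]
which I would prove as follows. From the decomposition $\Bal = 1 + \Set_+\cdot\Bal$ one gets $\Set\cdot\Bal = \Set_+\cdot\Bal + \Bal = 2\Bal - 1$, hence the appealing identity $1 + \Set\cdot\Bal = 2\Bal$. Differentiating $\Set^r\Bal^r$ with the product rule, using $\Set' = \Set$ and $\Bal' = \Set\cdot\Bal^2$ (a direct consequence of $\Bal(2-\Set)=1$), yields $(\Set^r\Bal^r)' = r\Set^r\Bal^r + r\Set^{r+1}\Bal^{r+1} = r\Set^r\Bal^r(1+\Set\cdot\Bal) = 2r\Set^r\Bal^{r+1}$.

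Next I would apply the fundamental theorem of the $\LL$-calculus, $\int F' = F_+$. Since $(\Set^r\Bal^r)[\emptyset] = 1$, we have $(\Set^r\Bal^r)_+ = \Set^r\Bal^r - 1$, and so
\[
\int\Set^r\Bal^{r+1} \,=\, \frac{1}{2r}\bigl(\Set^r\Bal^r - 1\bigr)
\]
for $r\ge 1$. Combining this with the $\LL$-species equality $(R_r)^{\bullet} = r\cdot R_r$ (immediate from $(R_r)^{\bullet}(x) = x R_r'(x) = r|R[r]|x^r/r!$, together with $F=G\iff F(x)=G(x)$ for $\LL$-species) and distributivity of the ordinal product, each summand of Theorem~\ref{thm_uniPsiR} becomes
\[
(R_r)^{\bullet}\odot\int\Set^r\Bal^{r+1} \,=\, \tfrac{1}{2}\bigl(R_r\odot\Set^r\Bal^r - R_r\bigr).
\]

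The remainder is bookkeeping. Substituting back and writing $R = R_0 + \sum_{r\ge 1}R_r$, the right-hand side of Theorem~\ref{thm_uniPsiR} collapses to $R_0 + \tfrac{1}{2}R_+ + \tfrac{1}{2}\sum_{r\ge 1}R_r\odot\Set^r\Bal^r$, where $R_+ = \sum_{r\ge 1}R_r$. Expanding the proposed formula via $R_r\odot 1 = R_r$ (and handling the $r=0$ summand separately, where $1 + \Set^0\Bal^0 = 2$ contributes $R_0$ after the factor of $\tfrac{1}{2}$) yields the same expression. The only genuinely non-mechanical step is spotting the derivative identity $(\Set^r\Bal^r)' = 2r\,\Set^r\Bal^{r+1}$; once that is in hand, the proof is pure substitution and rearrangement.
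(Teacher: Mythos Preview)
Your proposal is correct and follows essentially the same approach as the paper: both deduce the corollary from Theorem~\ref{thm_uniPsiR} via the integral identity $\int\Set^r\Bal^{r+1}=\frac{1}{2r}(\Set^r\Bal^r-1)$, obtained by differentiating $\Set^r\Bal^r$ and invoking the fundamental theorem of calculus. The paper merely asserts that this identity ``is easily verified using differentiation,'' whereas you supply the details (including the pleasant $1+\Set\cdot\Bal=2\Bal$); the final bookkeeping is arranged slightly differently but is equivalent.
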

\begin{proof}
The $\LL$-species identity
\begin{equation}
\int\Set^r\Bal^{r+1}=\frac{1}{2r}(\Set^r\Bal^r-1).\label{eq_intBal}
\end{equation}
is easily verified using differentiation and the species version of the
fundamental theorem of calculus.
Next,
\begin{align*}
\Cay_{R} &= R + \sum_{r\ge 1}
(R_r)^{\bullet}\odot\left(\int\Set^r\Bal^{r+1}\right)
&& \text{(by Theorem~\ref{thm_uniPsiR})}\\
&= \sum_{r\ge 0}
R_r+(R_r)^{\bullet}\odot\left(\int\Set^r\Bal^{r+1}\right)
&& \text{(since $R=\sum_{r\ge 0}R_r$ \& $R^{\bullet}_0=\emptyset$)}\\
&= \sum_{r\ge 0}
R_r+rR_r\odot\frac{1}{2r}(\Set^r\Bal^r-1)
&& \text{(by equation~\eqref{eq_intBal})}\\
&= \frac{1}{2}\sum_{r\ge 0} R_r\odot\left(1+\Set^r\Bal^r\right),
\end{align*}
where the last identity follows by $R_r=R_r\odot 1$ and linearity of $\odot$.
\end{proof}

Finding a combinatorial proof of this corollary remains an open problem.

\section{Statistics and asymptotics}\label{section_stats}

Theorem~\ref{thm_PsiRr} provides a closed formula
for the number of $\Psi_R$-structures with~$i$ internal nodes,
among which~$r$ are recurrent, and~$j$ leaves. By suitably summing
over~$r$, $i$ and~$j$, we obtain the distribution of each of these
statistics over $R$-recurrent functional digraphs. For instance,
the number of $R$-recurrent functional digraphs with~$n$ nodes
and~$r$ recurrent points is
\[
\sum_{i+j=n}|\Psi_{R_r}[i,j]|
=\frac{|R[r]|}{r!}\sum_{i=0}^ni!\sdiff{n}{i}{r}.
\]
The distributions of internal nodes and leaves are obtained similarly.
The total number of recurrent points, internal nodes and leaves
can be computed in the same fashion
by multiplying the term inside the summation by~$r$, $i$ or~$j$.
An example is the total number of recurrent points
over $\Psi_R$-structures on $[i,j]$:
\[
\sum_{r=0}^i|\Psi_{R_r}[i,j]|\cdot r
=\sum_{r=0}^i\frac{|R[r]|}{r!}i!\sdiff{i+j}{i}{r}\cdot r.
\]
Multiplying by~$r$ has the same effect as pointing to the
$R$-structure. This gives us an elegant species expression for
the total number of recurrent points:
\begin{equation}
\Psi_{R^{\bullet}}(X,Y),\;
\text{ where }
|\Psi_{R^{\bullet}}[i,j]|
=\sum_{r=0}^i|\Psi_{R_r}[i,j]|\cdot r.
\end{equation}

If two species $F$ and $G$ are related by $F=\Set(G)$, with $G(0)=0$,
then~$G$ is said to be the species of \emph{connected} $F$-structures.
In this sense, an $F$-structure is a set of $G$-structures each of
which is called a \emph{connected component}.
We are by now familiar with the species of connected permutations,
that is, cycles $\Cyc$ where $\Sym=\Set(\Cyc)$.
Not every species admits a notion of connected
structure in the realm of $\BB$-species, but it is always
possible~\cite[Proposition 18]{BLL}, under the assumption $F(0)=1$, to construct a virtual
species~$G$ such that $F=\Set(G)$. The species~$G$ is uniquely
determined and is called the \emph{combinatorial logarithm} of~$F$,
denoted by $G=\log F$.

For any unisort species $R$ such that $R(0)=1$, we define the two-sort species of
\emph{connected $R$-recurrent functional digraphs} by letting
the recurrent part be equal to the logarithm of~$R$:
$$
\Psi_{\log R}(X,Y).
$$
Pointing to the $\Set$-structure in $\Set(\log R)=R$
has the effect of multiplying by the number of connected
components. Therefore, the total number of connected components over
$\Psi_R$-structures is obtained as $\Psi_{\Set^{\bullet}(\log R)}$.
Note that $\Set^{\bullet}(X)=X\cdot\Set(X)$, from which
\begin{equation}\label{eq_totcomp}
\Set^{\bullet}(\log R)=R\cdot\log R
\quad\text{and}\quad
\Psi_{\Set^{\bullet}(\log R)} = \Psi_{R\log R}
\end{equation}
follows.
Finally, Theorem~\ref{thm_PsiRr} gives the total number of
connected components over $\Psi_R$-structures on $[i,j]$:
\begin{equation}\label{eq_totcompnum}
\sum_{r=0}^i|\Psi_{R\log R}[i,j]|
=\sum_{r=0}^i\frac{|(R\log R)[r]|}{r!}i!\sdiff{i+j}{i}{r}.
\end{equation}

To illustrate the techniques introduced in this section, we will show
that permutations, Cayley permutations and endofunctions share the
following property:

\emph{The total number of recurrent points over connected structures of
  a given size equals the total number of structures of that size.}


First, we consider permutations. There are $(n-1)!$ connected
permutations (cycles) of size~$n\geq 1$. Since every point in a
permutation is recurrent, the total number of recurrent points over
connected structures is $(n-1)!\cdot n=n!=|\Sym[n]|$.  To deal with
Cayley permutations and endofunctions we use the species of connected
functional digraphs, $\Psi_{\Cyc}$. Since $\Cyc'=\Lin$, we find that
$\Cyc^{\bullet} = X\cdot \Cyc' = X\cdot \Lin = \Lin_+$.
Thus, the total number of recurrent points over $\Psi_{\Cyc}$ is given by
\begin{align*}
|\Psi_{\Cyc^{\bullet}}[i,j]|
&=|\Psi_{\Lin_+}[i,j]|
=|\Psi_{\Sym_+}[i,j]|.
\end{align*}
Finally, we translate the previous identity to functional digraphs of
Cayley permutations and endofunctions to establish the desired property:
\begin{align*}
|\End_{\Cyc^{\bullet}}[n]|
&=|\End_{\Sym}[n]|=|\End[n]|;\\
|\Cay_{\Cyc^{\bullet}}[n]|
&=|\Cay_{\Sym}[n]|=|\Cay[n]|.
\end{align*}
The crucial property that makes the above derivation work is
$\Cyc^{\bullet} \equiv \Sym_+$. In other words, the species $F=\Sym_+$
satisfies $(\log F)^{\bullet}\equiv F_+$. The
solutions to this differential equation are precisely the \emph{$k$-colored
  permutations}. That is, $F \equiv \Sym(kX)$ for some positive integer
$k$.

Next, we consider the total number of cycles/connected components
over structures in $\Psi_{\Sym}[i,j]$. It is simply obtained by
letting $R=\Sym$ in equation~\eqref{eq_totcomp}, where
$\Set^{\bullet}(\log\Sym)=\Set^{\bullet}(\Cyc)$ gives
the total number of cycles over all permutations.
It is well known
that the average number
of cycles over permutations of $[n]$ is the $n$th harmonic number
$\Har{n}=\sum_{k=1}^n 1/k$. To see this, note that
$\Set^{\bullet}(\Cyc)=\Sym\cdot\log(\Sym)=\Sym\cdot\Cyc$
and
\[
\Set^{\bullet}(\Cyc)(x)
=\bigl(\Sym\cdot\Cyc\bigr)(x)
=\frac{1}{1-x}\cdot\log\left(\frac{1}{1-x}\right).
\]
Therefore,
$$
\bigl|\Set^{\bullet}(\Cyc)[n]\bigr|
=\bigl|(\Sym\cdot\Cyc)[n]\bigr|
=\sum_{k=1}^n\binom{n}{k}(n-k)!(k-1)!=n!\Har{n}.
$$
Now, by identity~\eqref{eq_totcompnum}, the total
number of cycles over $\Psi_{\Sym}$-structures on $[i,j]$ is
$$
|\Psi_{\Sym\log\Sym}[i,j]|
=i!\sum_{r=0}^i\frac{r!\Har{r}}{r!}\sdiff{i+j}{i}{r}
=i!\sum_{r=0}^i\Har{r}\sdiff{i+j}{i}{r}.
$$
Corollary~\ref{cor_endcay_enum} yields the total
number of cycles over endofunctions (see A190314~\cite{Sl}) and
Cayley permutations of size~$n$:
\begin{align*}
|\End_{\Set^{\bullet}(\Cyc)}[n]|
&=\sum_{i=0}^n\ff{n}{i}\sum_{r=0}^i\Har{r}\sdiff{n}{i}{r};\\
|\Cay_{\Set^{\bullet}(\Cyc)}[n]|
&=\sum_{i=0}^n\,i!\,\sum_{r=0}^i\Har{r}\sdiff{n}{i}{r}.
\end{align*}
The first few numbers of the latter sequence are
$$
0, 1, 4, 20, 126, 966, 8754, 91686, 1090578, 14528502, 214337874, 3469418646.
$$
At the time of writing, this sequence is not in the OEIS~\cite{Sl}.

The average number of cycles over $\End[n]$ is asymptotic to
$\frac{1}{2}(\log (2n)+\gamma)$, where Euler's constant $\gamma$
is defined as the limiting difference between the harmonic series
and the natural logarithm.
What is the corresponding limit for the average number of cycles
over Cayley permutations? Is there a generalization of Euler's
constant for the average number of cycles over $R$-recurrent
functional digraphs?

\section{Generalizations}\label{section_generalizations}

Equation~\eqref{eqdiff_psiR} in Section~\ref{section_diffeq}
characterizes the two-sort species $\Psi_R$.
There, linear orders determine the shape of the
branches---$\Lin(X)\cdot Y$---that are recursively appended to the
recurrent part to obtain $\Psi_R$-structures.
Replacing~$\Lin$ with any unisort species~$T$ allows us to manipulate
the shape of these branches. Define the species $\Psi_{R,T}(X,Y)$
accordingly as the solution to the system of combinatorial equations
\[
  \ddy{}\Psi_{R,T}(X,Y) = T(X)\cdot\Psi_{R,T}\pointX(X,Y),\qquad \Psi_{R,T}(X,0)=R(X).
\]
For instance, if we let $T=1$ be the species characteristic of the empty set, then we obtain structures where
all the leaves have distance one from a recurrent point (since each
branch $T(X)\cdot Y=Y$ consists of a single leaf).
Letting $R=\Set$, we end up with a ``brush-shaped'' endofunction,
that is, a set of roots (of sort~$X$) to each of which is appended a set
of leaves (of sort~$Y$):

\begin{center}
\begin{tikzpicture}[
thick,
baseline=0pt,
scale=0.8]
\node at (1.2,0) {$\dots$};
\node at (4.2,0) {$\dots$};
\node at (8.2,0) {$\dots$};
\node at (6,1) {$\dots$};
\begin{scope}[font = {\small},dot/.style = {draw,circle,inner sep=1.5pt, node contents={}, label=#1},
every loop/.style={min distance=7mm,in=50,out=130,looseness=10},]
\node[fill=black] (1) at (1,1) [dot=left:$1$];
\path (1) edge[loop, -stealth] (1);
\node (a1) at (0.1,0) [dot=below:{$a_1$}];
\node (b1) at (0.5,0) [dot=below:{$a_2$}];
\node (d1) at (1.9,0) [dot=below:{$a_{\ell_1}$}];
\path (a1) edge[-stealth] (1);
\path (b1) edge[-stealth] (1);
\path (d1) edge[-stealth] (1);
\node[fill=black] (2) at (4,1) [dot=left:$2$];
\path (2) edge[loop, -stealth] (2);
\node (a2) at (3.1,0) [dot=below:{$b_1$}];
\node (b2) at (3.5,0) [dot=below:{$b_2$}];
\node (d2) at (4.9,0) [dot=below:{$b_{\ell_2}$}];
\path (a2) edge[-stealth] (2);
\path (b2) edge[-stealth] (2);
\path (d2) edge[-stealth] (2);
\node[fill=black] (3) at (8,1) [dot=left:$r$];
\path (3) edge[loop, -stealth] (3);
\node (a3) at (7.1,0) [dot=below:{$r_1$}];
\node (b3) at (7.5,0) [dot=below:{$r_2$}];
\node (d3) at (8.9,0) [dot=below:{$r_{\ell_r}$}];
\path (a3) edge[-stealth] (3);
\path (b3) edge[-stealth] (3);
\path (d3) edge[-stealth] (3);
\end{scope}
\end{tikzpicture}
\end{center}

Note that
$$
\Psi_{\Set,1}(X,Y)=\Set\bigl(X\cdot \Set(Y)\bigr).
$$
The corresponding endofunctions of $\End_{\Set,1}$ and Cayley
permutations of $\Cay_{\Set,1}$ are precisely those~$f$ such that
$f^{(2)}=f$, where $f^{(2)}=f\circ f$ denotes the composition
of~$f$ with itself (see also A000248 and A026898~\cite{Sl}).
In other words, $\Psi_{\Set,1}$ is the species of \emph{idempotent
functional digraphs} of two-sorts.

Let us push this construction a bit further. Setting $R=\Sym$ has the
effect of allowing cycles of any length, yielding maps~$f$ where
$f^{(k)}=f$ for some $k>1$; in this case,
$$
\Psi_{\Sym,1}(X,Y)=\Sym\bigl(X\cdot \Set(Y)\bigr).
$$
In the same spirit, we fix $k$ and let
$$
R=\Set\circ\sum_{i|k}\Cyc_i
$$
be the species of permutations whose cycle lengths divide~$k$.
Then $\Psi_{R,1}$ yields maps $f$ such that $f^{(k+1)}=f$.
What other choices of $R$ and $T$ lead to interesting examples?

In Section~\ref{section_Rrecdig}, we have defined the two-sort species
$\Psi_R=R\circ\rtree$ whose structures are obtained by arranging
a set of rooted trees in an $R$-structure.
We can tweak this definition by replacing rooted trees with any
other class of trees. For instance, let $\btree=\btree(X,Y)$ be the
two-sort species of rooted trees
where each node has at most two (unordered) children. They are
obtained as
$$
\btree(X,Y) =
X\cdot \Bigl(\bigl( 1 + X + \Set_2\bigr) \circ \bigl(\btree(X,Y) - X + Y\bigr)\bigr).
$$
Then, $\Sym\circ\btree(X,Y)$ yields functional digraphs where each
recurrent point has indegree at most three, and each nonrecurrent
point has indegree at most two.
In the corresponding endofunctions and Cayley permutations, obtained
as usual via~\eqref{def_FXX} and~\eqref{def_HATF}, recurrent points
have at most three preimages, and nonrecurrent points have at most
two preimages (see also A201996~\cite{Sl}).
More generally, rooted trees where each node has at most $k$ children
$$
\mathcal{K}(X,Y) =
X\cdot \Bigl(\bigl( 1 + X + \Set_2 + \cdots + \Set_k\bigr) \circ \bigl(\mathcal{K}(X,Y) - X + Y\bigr)\bigr)
$$
determine maps where recurrent points have at most $k+1$ preimages
and nonrecurrent points have at most~$k$ preimages.
What can we say about other classes of trees and choices of $R$?

\appendix

\section{Proof of Proposition~\ref{prop:cayder-asymp}}\label{appendix_asymp}

Recall that Proposition~\ref{prop:cayder-asymp} refines
Theorem~\ref{main-theorem} by giving the rate of convergence.
We establish five preparatory lemmas before giving the proof.
The key ingredients are a bound on pairwise intersections of the
sets $S(a,b;n,i)$ introduced before Lemma~\ref{lem:merge},
a uniform estimate on~$p_r(n)$ obtained via inclusion--exclusion,
and an explicit series evaluation.

Truncating the inclusion--exclusion formula for a finite union
$\bigcup_{j=1}^{M} A_j$ after the first or second term yields,
respectively, an upper and a lower bound:
\[
  \sum_{j=1}^{M} |A_j| - \sum_{1 \leq j < k \leq M} |A_j \cap A_k|
  \leq \biggl|\,\bigcup_{j=1}^{M} A_j\,\biggr|
  \leq \sum_{j=1}^{M} |A_j|.
\]
These are known as the first two \emph{Bonferroni inequalities}.
To apply them in our setting we need to control pairwise
intersections, which is the purpose of the next lemma.
Recall that $S(a,b;n,i)$ denotes the set of partitions of~$[n]$
into $i$~blocks in which $a$ and~$b$ lie in the same block
(defined before Lemma~\ref{lem:merge}).

\begin{lemma}\label{lem:intersection}
For $n \geq 3$ and distinct $2$-element subsets
$\{a,b\}, \{c,d\} \subseteq [n]$, and for all $i\ge 0$,
\[
  |S(a,b;n,i) \cap S(c,d;n,i)| \leq \setstirling{n-2}{i}.
\]
\end{lemma}

\begin{proof}
A partition in $S(a,b;n,i) \cap S(c,d;n,i)$ places $a,b$ in a
common block and $c,d$ in a common block.
Since $\{a,b\} \neq \{c,d\}$, both
$\{a,b\} \setminus \{c,d\}$ and $\{c,d\} \setminus \{a,b\}$
are nonempty.
Choose $x \in \{a,b\} \setminus \{c,d\}$ and
$y \in \{c,d\} \setminus \{a,b\}$;
note that $x \neq y$ since $x \in \{a,b\}$ while
$y \notin \{a,b\}$.
Write $x'$ for the other element of $\{a,b\}$ and $y'$ for the
other element of $\{c,d\}$, so that $x$ and~$x'$ lie in the same
block and $y$ and~$y'$ lie in the same block.

We construct an injection from $S(a,b;n,i) \cap S(c,d;n,i)$ into
the set of partitions of $[n] \setminus \{x,y\}$ into $i$~blocks
by deleting $x$ and~$y$ from their respective blocks.
Since the block of~$x$ also contains~$x' \neq x$, it remains
nonempty after removing~$x$; likewise for the block of~$y$, which
contains~$y' \neq y$.
Thus no blocks are lost, and the result is a partition of the
$(n-2)$-element set $[n] \setminus \{x,y\}$ into exactly
$i$~blocks.

To see that the map is injective, note that the original partition
can be recovered by reinserting~$x$ into the block of~$x'$ and~$y$
into the block of~$y'$.
This is well-defined because both $x'$ and $y'$ remain in
$[n]\setminus\{x,y\}$: indeed $x' \neq x$ by definition,
and $x' \neq y$ since $x' \in \{a,b\}$ while $y \notin \{a,b\}$;
similarly $y' \neq y$ and $y' \neq x$.
Since the codomain has $\setstirling{n-2}{i}$ elements, the bound
follows.
\end{proof}

With the intersection bound in hand, we can obtain a uniform
estimate on~$p_r(n)$.
We first record the Fubini ratios that will be needed.

\begin{lemma}\label{lem:fubini-ratios}
With $r_1$ as in Lemma~\ref{lem:fubini-asymp},
\begin{align*}
  \frac{\fub{n-1}}{\fub{n}}
  =\frac{\log 2}{n}\Bigl(1+O(r_1^{\,n})\Bigr)
  \quad & \text{and} \quad
  \frac{\fub{n-2}}{\fub{n}}
  =\frac{(\log 2)^2}{n(n-1)}\Bigl(1+O(r_1^{\,n})\Bigr). \\
\intertext{Since $r_1^{\,n} = o(1/n^2)$, these give}
  \frac{\fub{n-1}}{\fub{n}}
  = \frac{\log 2}{n} + O\!\left(\frac{1}{n^2}\right)
  \quad & \text{and} \quad
  \frac{\fub{n-2}}{\fub{n}}
  = O\!\left(\frac{1}{n^2}\right).
\end{align*}
\end{lemma}

\begin{proof}
By Lemma~\ref{lem:fubini-asymp},
$\fub{n} = \frac{1}{2} n!(\log 2)^{-n-1}
\bigl(1+O(r_1^{\,n})\bigr)$, so
\[
  \frac{\fub{n-1}}{\fub{n}}
  = \frac{(n-1)!\,(\log 2)^{n+1}}
         {n!\,(\log 2)^{n}}
    \cdot \frac{1+O(r_1^{\,n})}{1+O(r_1^{\,n})}
  = \frac{\log 2}{n}\Bigl(1+O(r_1^{\,n})\Bigr),
\]
and similarly for $\fub{n-2}/\fub{n}$.
\end{proof}

\begin{lemma}\label{lem:uniform-pr}
As $n\to\infty$, uniformly for $0 \leq r \leq n$,
\[
  p_r(n) = \frac{r\log 2}{n} + O\!\left(\frac{r^4}{n^2}\right).
\]
In other words, there exists a constant $C > 0$ such that for all
sufficiently large~$n$,
$|p_r(n) - r\log 2/n| \leq C\,r^4/n^2$
for every $0 \leq r \leq n$.
\end{lemma}

\begin{proof}
As in the proof of Lemma~\ref{lem:union},
$\setstirling{n}{i} - \stirl{n}{i}{r}
= \bigl|\bigcup_{\{a,b\} \subseteq [r]} S(a,b;n,i)\bigr|$,
a union of $M = \binom{r}{2}$ sets.
By Lemma~\ref{lem:merge},
each has size $\setstirling{n-1}{i}$, and by
Lemma~\ref{lem:intersection}, each pairwise intersection has
size at most $\setstirling{n-2}{i}$.
The Bonferroni inequalities therefore give
\[
  M\,\setstirling{n-1}{i} - \binom{M}{2}\setstirling{n-2}{i}
  \leq \setstirling{n}{i} - \stirl{n}{i}{r}
  \leq M\,\setstirling{n-1}{i}.
\]
In particular, $\setstirling{n}{i} - \stirl{n}{i}{r}$ differs from
$M\,\setstirling{n-1}{i}$ by at most
$\binom{M}{2}\,\setstirling{n-2}{i}$.
Since $\sum_i i!\,\setstirling{m}{i} = \fub{m}$, multiplying
by~$i!$ and summing over~$i$ gives
\begin{align*}
  \fub{n} - \rfub{n}{r}
  &= M\,\fub{n-1}
    + O\bigl(\tbinom{M}{2}\,\fub{n-2}\bigr) \\
  &= \binom{r}{2}\,\fub{n-1}
    + O\bigl(r^4\,\fub{n-2}\bigr),
\end{align*}
where the second equality substitutes $M = \binom{r}{2}$
and $\binom{M}{2} \leq r^4/8$.

To pass from $\rfub{n}{r}$ to $\sfub{n}{r}$, we compute
\begin{align*}
  \sfub{n}{r}
  &= \rfub{n}{r} - \rfub{n}{r+1} \\
  &= (\fub{n} - \rfub{n}{r+1})
     - (\fub{n} - \rfub{n}{r}) \\
  &= \binom{r+1}{2}\fub{n-1} - \binom{r}{2}\fub{n-1}
     + O(r^4\,\fub{n-2}) \\
  &= r\,\fub{n-1} + O\bigl(r^4\,\fub{n-2}\bigr).
\end{align*}
Dividing by $\fub{n}$ gives
\[
  p_r(n) = r\,\frac{\fub{n-1}}{\fub{n}}
  + O\!\left(r^4\,\frac{\fub{n-2}}{\fub{n}}\right).
\]
By Lemma~\ref{lem:fubini-ratios}, the second term is
$O(r^4/n^2)$, while the first is
$r\log 2/n + O(r/n^2)$.
Since $r \leq r^4$ for $r \geq 1$ and both sides vanish
for $r = 0$, the result follows.
\end{proof}

It remains to evaluate the series $\sum r\,\epsilon_r$ that will
arise when we substitute the estimate of
Lemma~\ref{lem:uniform-pr}.

\begin{lemma}\label{lem:series-eval}
$\displaystyle\sum_{r \geq 0} r\,\epsilon_r = -\frac{1}{2e}$.
\end{lemma}

\begin{proof}
By Lemma~\ref{lem:error},
$\epsilon_r = -\sum_{k \geq r+1} (-1)^k/k!$, so
\[
  \sum_{r \geq 0} r\,\epsilon_r
  = -\sum_{r \geq 0}\, \sum_{k \geq r+1}
    \frac{r\,(-1)^k}{k!}.
\]
Since $\sum_{r \geq 0} r\,|\epsilon_r| < \infty$, we may
interchange the order of summation.
The sum over~$r$ then runs from~$0$ to~$k-1$, contributing
$\sum_{r=0}^{k-1} r = \binom{k}{2}$, which vanishes for $k \leq 1$.
Thus
\[
  \sum_{r \geq 0} r\,\epsilon_r
  = -\sum_{k \geq 2} \frac{(-1)^k}{k!} \binom{k}{2}
  = -\frac{1}{2}\sum_{k \geq 2} \frac{(-1)^k}{(k-2)!}
  = -\frac{1}{2}\sum_{j \geq 0} \frac{(-1)^j}{j!} = -\frac{1}{2e}.\qedhere
\]
\end{proof}

Finally, we verify that the splitting point $K = \lfloor \log n \rfloor$
grows fast enough for the tail to be negligible.

\begin{lemma}\label{lem:factorial-tail}
For $K = \lfloor \log n \rfloor$, we have
$1/(K+1)! = o(1/n)$.
\end{lemma}

\begin{proof}
Set $m = K + 1 = \lfloor \log n \rfloor + 1$, so
$m \sim \log n$.  Stirling's formula gives
$\log(m!) = m\log m - m + O(\log m)$.
Since $m\log m \sim \log n\cdot\log\log n$, we have
$\log(m!) - \log n \to \infty$ and hence $m!/n \to \infty$.
\end{proof}

\begin{proof}[Proof of Proposition~\ref{prop:cayder-asymp}]
By Lemma~\ref{lem:error}, it suffices to show that
$\sum_{r=0}^{n} \epsilon_r\, p_r(n) \sim -\log 2/(2en)$.
We split at $K = \lfloor\log n\rfloor$:
\[
  \sum_{r=0}^{n} \epsilon_r\, p_r(n)
  = \sum_{r=0}^{K-1} \epsilon_r\, p_r(n)
  + \sum_{r=K}^{n} \epsilon_r\, p_r(n).
\]
For the tail, $|\epsilon_r| \leq 1/(r+1)! \leq 1/(K+1)!$ for all
$r \geq K$, and $\sum_{r=0}^{n} p_r(n) = 1$, so
$\bigl|\sum_{r=K}^{n} \epsilon_r\, p_r(n)\bigr|
\leq 1/(K+1)! = o(1/n)$ by
Lemma~\ref{lem:factorial-tail}.

For the main sum, Lemma~\ref{lem:uniform-pr} gives
$p_r(n) = r\log 2/n + O(r^4/n^2)$ for each $0 \leq r \leq K$.
Substituting,
\begin{equation}\label{eq:main-sum}
  \sum_{r=0}^{K-1} \epsilon_r\, p_r(n)
  = \frac{\log 2}{n}\sum_{r=0}^{K-1} r\,\epsilon_r
    + O\!\left(\frac{1}{n^2}\right),
\end{equation}
where the error uses
$\sum_{r \geq 0} r^4\,|\epsilon_r|
\leq \sum_{r \geq 0} r^4/(r+1)! < \infty$.
Since also $\sum_{r \geq 0} r\,|\epsilon_r| < \infty$,
the partial sum $\sum_{r=0}^{K-1} r\,\epsilon_r$ can be completed
to the full series $\sum_{r \geq 0} r\,\epsilon_r$ at the cost of
an error $\sum_{r \geq K} r\,|\epsilon_r| = O(1/K!) = o(1)$.
Applying Lemma~\ref{lem:series-eval} and collecting errors,
\[
  \sum_{r=0}^{n} \epsilon_r\, p_r(n)
  = -\frac{\log 2}{2en} + o\!\left(\frac{1}{n}\right).
\]
Indeed, the three error terms are all $o(1/n)$:
the tail is $o(1/n)$ by Lemma~\ref{lem:factorial-tail},
the error in~\eqref{eq:main-sum} is $O(1/n^2) = o(1/n)$,
and completing the series costs
$O(1/(nK!)) = o(1/n)$ since $1/K! = o(1)$.
This establishes Proposition~\ref{prop:cayder-asymp}.
\end{proof}


\begin{thebibliography}{20}

\bibitem{BBO} C. Bean, P.\ C. Bell, A. Ollson,
\emph{The insertion encoding of Cayley permutations},
arXiv: 2505.08480.

\bibitem{BLL} F. Bergeron, G. Labelle, P. Leroux,
\emph{Combinatorial species and tree-like structures},
Volume 67 of Encyclopedia of Mathematics and its Applications,
Cambridge University Press, Cambridge, 1998.

\bibitem{Bor} C.\ W. Borchardt,
\emph{Über eine Interpolationsformel für eine Art symmetrischer Functionen und über deren Anwendung},
Mathematische Abhandlungen der Königlichen Akademie der Wissenschaften zu Berlin, 1860.

\bibitem{Br} A.\ Z. Broder,
\emph{The r-Stirling numbers},
Discrete Mathematics, Vol. 49, pp. 241--259, 1984.

\bibitem{Cay} A. Cayley,
\emph{A theorem on trees},
Quarterly Journal of Pure and Applied Mathematics, Vol. 23, pp. 376--378, 1889.

\bibitem{CeCay} G. Cerbai,
\emph{Sorting Cayley permutations with pattern-avoiding machines},
The Australasian Journal of Combinatorics, Vol. 80, 2021.

\bibitem{CCtop} G. Cerbai, A. Claesson,
\emph{Transport of patterns by Burge transpose},
European Journal of Combinatorics, Vol. 108, 2023.

\bibitem{CCcaypol} G. Cerbai, A. Claesson,
\emph{Caylerian polynomials},
Discrete Mathematics, Vol. 347, 2024.

\bibitem{CCcaypol2} G. Cerbai, A. Claesson,
\emph{Enumerative aspects of Caylerian polynomials},
arXiv:2411.08426.

\bibitem{CCEG} G. Cerbai, A. Claesson, D.\ C. Ernst, H. Golab,
\emph{Pattern-avoiding Cayley permutations via combinatorial species},
Australasian Journal of Combinatorics, Vol. 94(2), 2026.

\bibitem{Cl} A. Claesson,
\emph{A species approach to {R}ota's twelvefold way},
Expositiones Mathematicae, Vol. 38(4), pp. 515--536, 2019.

\bibitem{FS} P. Flajolet, R. Sedgewick,
\emph{Analytic combinatorics}, Cambridge University Press, 2009.

\bibitem{J} A. Joyal,
\emph{Une théorie combinatoire des séries formelles},
Advances in Mathematics, Vol. 42(1), pp. 1--82, 1981.

\bibitem{Mon} P.\ R. Montmort,
\emph{Essay d'analyse sur les jeux de hazard},
Paris: Quillau, p. 189, 1708.

\bibitem{Sl} N.\ J.\ A. Sloane,
\emph{The on-line encyclopedia of integer sequences},
at oeis.org.

\end{thebibliography}
\end{document}